\numberwithin{equation}{section}
\newtheorem{assumptions}{Assumptions}[section]
\newtheorem{prop}{Proposition}[section]
\newtheorem{theo}{Theorem}[section]
\newtheorem{rema}{Remark}[section]
\newtheorem{lemm}{Lemma}[section]
\newtheorem{coro}{Corollary}[section]
\newtheorem{definition}{Definition}[section]
\newcommand{\ccd}[0]
{(\cdot)}
\newcommand{\ttnn}[1]
{\textnormal{#1}}
\newcommand{\rif}[1]
{(\ref{#1})}
\newcommand{\norm}[1]
{\left\| #1 \right\|}
\newcommand{\ra}[0]
{\rightarrow}
\newcommand{\rt}[0]
{(t)}
\newcommand{\rs}[0]
{(s)}
\newcommand{\mineq}[1]
{\leq #1}
\newcommand{\mageq}[1]
{\geq #1}
\newcommand{\equazioneref}[2]
{\begin{equation}\label{#1} \begin{split}
#2
\end{split} \end{equation}}
\newcommand{\eee}[1]
{\begin{align*} 
#1
 \end{align*}
}
\newcommand{\eps}[0]
{\varepsilon }
\newcommand{\ccal}[1]
{\mathcal{#1}}
\newcommand{\scr}[1]
{\mathscr{#1}}
\newcommand{\bb}[1]
{\mathbb{#1}}
\newcommand{\halfline}[0]
{ \bb R^+  }
\title[Control problems on infinite horizon]{Control problems on infinite horizon subject to time-dependent pure state constraints}%As you see, in square brackets goes a shorter title intended to appear at top of odd pages
\date {\today}
\author{Vincenzo Basco}%In square brackets goes a shorter name intended to appear at top of even pages
\address{Thales Alenia Space. Via Saccomuro, 24 -  Rome (Italy)}
\email{vincenzo.basco@thalesaleniaspace.com}
\keywords{Infinite Horizon Control Problems; State Constraints; Regularity of Value Functions; Viability.}
\begin{document}

\begin{abstract}
In the last decades, control problems with infinite horizons and discount factors have become increasingly central not only for economics but also for applications in artificial intelligence and machine learning. The strong links between reinforcement learning and control theory have led to major efforts towards the development of algorithms to learn how to solve constrained control problems. In particular, discount plays a role in addressing the challenges that come with models that have unbounded disturbances. Although algorithms have been extensively explored, few results take into account time-dependent state constraints, which are imposed in most real-world control applications. For this purpose, here we investigate feasibility and sufficient conditions for Lipschitz regularity of the value function for a class of discounted infinite horizon optimal control problems subject to time-dependent constraints. We focus on problems with data that allow nonautonomous dynamics, and Lagrangian and state constraints that can be unbounded with possibly nonsmooth boundaries.
\end{abstract}

\maketitle

\tableofcontents

\section{Introduction}
Infinite time horizon models arising in mathematical economics and engineering typically involve control systems with restrictions on both controls and states. Models of optimal allocation of economic resources were, in the late 50s, among the key incentives for the creation of the mathematical theory of optimal control. Constrained optimal control problems are often solved in practical control applications, which are more challenging to deal with than unconstrained ones. 

Over the last few decades, an increasingly central role has been given to infinite horizon control problems with discount factors not only for applications in finance but for applications to artificial intelligence and machine learning. Strong connections between reinforcement learning  and control theory have prompted a major effort towards developing algorithms to learn optimal solutions. Discounting plays a role in addressing the challenges that come with models where unbounded disturbances are present. The discount configuration is common in many stochastic control problems \cite{bertsekas2012dynamic,kamgarpour2017infinite,kouvaritakis2006mpc,van2013infinite}, reinforcement learning \cite{bertsekas2019reinforcement}, and financial engineering \cite{FRANKEL2016396,nystrup2019multi}. Discount factors ensure the feasibility of constrained optimal control problems with potentially unbounded perturbations. In dynamic programming discounting is often used to ensure well-posedness of  problems with infinite horizons and possibly unlimited costs \cite{bascofrankowska2018lipschitz,blackwell1965discounted}. Moreover, with an appropriate value of the discount factor, stability is guaranteed  \cite{postoyan2016stability}.
 
Much of the present works in literature focuses on manage constraints in control problems. In general, state constraints imply non-convex feasible sets. So, there are several ways to provide amenable approximations in deterministic and probabilistic frameworks, e.g. using  available informations from probability distributions \cite{schildbach2015linear}, deterministic approximation methods jointly with confidence sets \cite{kouvaritakis2010explicit}, deterministic and stochastic tubes \cite{basco2019hamilton,cannon2010stochastic},  attainable sets \cite{hewing2018stochastic} or conservative approach with probabilistic inequalities \cite{farina2013probabilistic,hashimoto2013probabilistic} and random methods \cite{calafiore2012robust,margellos2014road}. Although deterministic algorithms have been widely investigated to solve the optimal regulation problems, few results consider the solution of optimal synthesis in the presence of time-dependent state constraints, needed for most real-world control applications (cfr. Section 2). A fundamental point in constrained cases is how to ensure desirable properties, as existence of viable solutions and stability, including regularity of the value function. 
This is particularly evidend in reinforcement learning, where value functions necessitates employing a function approximator with a limited set of parameters. Several researchers have emphasized that integrating reinforcement learning algorithms subject to state constraints with general approximation systems, such as neural networks, fuzzy sets, or polynomial approximators, can lead to unstable or divergent outcomes, even for straightforward problems (cfr. \cite{baird1995residual,boyan1994generalization,gordon1995stable}). 

%Furthermore, classical theorems are not directly applicable to problems with constraints (cfr. \cite[Example 1]{bascofrankcann2017necessary} and the references therein).
%
%A fundamental point in constrained cases is how to ensure desirable properties, as existence of viable solutions and stability, including regularity of the value function. 
%This is particularly evidend in reinforcement learning, where value functions necessitates employing a function approximator with a limited set of parameters. Several researchers have emphasized that integrating reinforcement learning algorithms subject to state constraints with general approximation systems, such as neural networks, fuzzy sets, or polynomial approximators, can lead to unstable or divergent outcomes, even for straightforward problems (refer to [Boyan and Moore, 1995], [Baird, 1995], [Gordon, 1995]). 
%
%Our current investigation demonstrates that traditional RL algorithms, such as Q-learning (see [Watkins, 1989]), used in conjunction with a look-up table created from a simple discretization of the state space, may result in a highly inaccurate approximation of the value function. This issue arises because discretizing a deterministic continuous process lacks Markovian properties. Consequently, algorithms like Q-learning, which estimate state values as averages of subsequent states based on their occurrence, fail to converge effectively.
%
%As an alternative, we propose an algorithm that computes state values by averaging the values of the subsequent states according to the dynamics of the system.
In this settings, a key role is given by the dynamic programming principle and the Hamilton-Jacobi-Bellman (HJB) equation associated with the control problem \cite{bascofrankowska2018lipschitz,vinter00}. The value function, when differentiable, solves the HJB equation in the classical sense. However, it is well known that such a kind of notion turns out to be quite unsatisfactory for HJB equations arising in control theory and the calculus of variations (we refer the interested reader to the pioneer works \cite{crandalllionsevans1984someproperties,crandalllions1983viscosity} and \cite{BASCO2022126452} for further discussions). Indeed, the value function loses the differentiability property whenever there are multiple optimal solutions at the same initial condition or additional state constraints are present. The lack of classical (smooth) solutions to HJB equations for regular data led to the need of a new notion -- i.e. weak or viscosity solution -- of this equation, in the class of Lipschitz continuous functions. Such regularity is not taken for granted especially when time-dependent state constraints are imposed on the control problem \cite{bascofrankcann2017necessary,bascofrankowska2018lipschitz}.

%Here we recover the Lipschitz regularity of the value function for infinite horizon control problems with discount factor under time-dependent state constraints. In particular we deal with functional type. To study feasibility and neighboring estimate on the set of feasible trajectories, we impose controllability conditions on the constraint set (cfr. Sections 4-5 below). Such study is conducted using recently investigated viability results. (cfr. \cite{BASCO2022126452}). In addition, the value function turn out to be vanishing at infinity on feasible set, for all large discount factors.

In this paper, we focus on analyzing the Lipschitz regularity of the value function of infinite horizon control problems, specifically those with discount factors and time-dependent state constraints of a functional type. Our approach is designed to provide a comprehensive and rigorous analysis of this problem. We carefully consider the impact of the presence of time-dependent state constraints and discount factors, as these factors can significantly alter the optimal control strategy and lead to unexpected system behavior. To ensure feasibility and obtain neighboring estimates on the set of feasible trajectories, sufficient conditions on the constraint set by means of inward pointing conditions are imposed (cfr. Sections 3-4 below).
%This is particularly evidend in reinforcement learning, where value functions necessitates employing a function approximator with a limited set of parameters. Several researchers have emphasized that integrating reinforcement learning algorithms subject to state constraints with general approximation systems, such as neural networks, fuzzy sets, or polynomial approximators, can lead to unstable or divergent outcomes, even for straightforward problems (refer to [Boyan and Moore, 1995], [Baird, 1995], [Gordon, 1995]). 
%
%Our current investigation demonstrates that traditional RL algorithms, such as Q-learning (see [Watkins, 1989]), used in conjunction with a look-up table created from a simple discretization of the state space, may result in a highly inaccurate approximation of the value function. This issue arises because discretizing a deterministic continuous process lacks Markovian properties. Consequently, algorithms like Q-learning, which estimate state values as averages of subsequent states based on their occurrence, fail to converge effectively.
%
%As an alternative, we propose an algorithm that computes state values by averaging the values of the subsequent states according to the dynamics of the system.
More specifically, by employing recent viability results that were investigated in \cite{BASCO2022126452}, we establish Lipschitz regularity of the value function and viability of the system. We also demonstrate that the value function vanishes at infinity on the feasible set for all sufficiently large discount factors. This result is significant, as it implies that the value function is bounded on such set, which has important implications for the stability of the system. Overall, our analysis sheds light on the behavior and regularity of weak - or viscosity - solutions of HJB equations.

The outline of the present paper is as follows. In Section 2 we describe the general formulation of the optimal control problem addressed here, with notations and backgrounds on non-smooth analysis. The Section 3 is devoted to a controllability condition on constraint set. We give a viability and neighboring estimate results in Section 4 for feasible trajectories on infinite horizon. Meanwhile in Section 5 we show the desiderate Lipschitz continuity for the value function.

\section{Problem's Formulation and Backgrounds}\label{Sec_2}
In this paper we address the following infinite horizon control problem subject to functional constraints
\begin{equation*} 
\ttnn{minimize}\quad \int_t^{+\infty} e^{-\lambda s}{\bf L}(s,x,u) ds \tag{$\ccal P$}
\end{equation*}
\eee{
%\ttnn{minimize}\quad &\int_t^{+\infty} {\bf L}(s,x,u) ds \\
\ttnn{subject to}\quad  & x'={\bf f}(s,x,u)  \quad \ttnn{a.e. }s\\
\quad & x(t)=\bar x\\
\quad &u\rs \in U\rs \quad \ttnn{a.e. }s\\
\quad  & h_1(s, x\rs )\mineq 0 \quad \forall s\mageq t \\
\quad &\vdots\\
\quad  & h_m(s, x\rs )\mineq 0 \quad \forall s\mageq t.\\
}
We assume:
\begin{itemize}
\item the controls $u$ takes values in $\bb R^m$ and are Lebesgue measurable;
\item $U$ { is a measurable set-valued map with non-empty closed images} in $\bb R^m$;
\item $h_i$'s {are real valued functions, measurable in time and space}-$\Gamma^{1,\theta}${ regular, uniformly in time};
\end{itemize}
% constraints set
%\eee{
%\Omega(s)&=\bigcap_{i=1}^m \Omega_i(s)\\
% \Omega_i(s)&=\{x\in \mathbb{R}^n\,:\, h_i(s,x)\leqslant  0\}
%}
%
%is a $C^{1,1}$ function with bounded $\nabla g_i (\cdot) $ for all $i\in I:=\{1,...,m\}$. 
where $\Gamma^{1,\theta}$ stands for the class of continuously differentiable functions with $\theta$-H\"{o}elder continuous and bounded differential, i.e., for $\theta\in]0,1[$
\eee{
\psi\in \Gamma^{1,\theta} \quad \Longleftrightarrow \quad  &\psi\in C^1 \ttnn{ with } \nabla \psi \ttnn{ bounded,}\\
&\exists k>0:\,|\nabla \psi(x)-\nabla \psi(y)| \mineq k |x-y|^\theta.
}
%To support the constraints formulation in problem $(\mathcal P)$, we present three well-established dynamical models in applied science that are relevant to control problems with time-dependent pure state constraints. To keep the discussion concise, we omit the objective functions of the models. However, for a comprehensive and thorough understanding of these models, we refer interested readers to the relevant literature. The first model is described in Section 2.1 of reference \cite[Section 2.1]{de2013optimal}, the second model is presented in Section 2.2 of reference \cite{menon1990near}, and the third model is covered in Chapter II of reference \cite[Chapter II]{feichtinger2018control}. These models have been extensively studied in the literature and provide valuable insights into the complexities of control problems with state constraints. By presenting these models, we aim to demonstrate the relevance and applicability of the proposed approach in tackling real-world problems in various fields.

%-----

\noindent The optimal control problem described above is applicable to several scenarios within the fields of economics and engineering sciences (cfr. \cite{de2013optimal,feichtinger2018control,menon1990near}). In these fields of applications, functional constraints often appear as functions affine in space with measurable time-dependent terms, specifically, $h_i(s,x)=A(s)x_k+B(s)$ which falls under the framework of the proposed model. This family of functions extends to include $h_i(s,x)=A(s)\psi_i(x)+B(s)$, with $c_i\in \mathbb R^n$ a parameter and $\psi_i\in \Gamma^{1,\theta}$. It is worth to notice that the autonomous case with $\theta=1$ was previously studied in \cite{bascofrankowska2018lipschitz}.

\subsection{Preliminaries and Notations}
Let $B(x,\delta)$ stand for the closed ball in $\mathbb{R}^n$ with radius $\delta>0$ centered at $x\in \mathbb{R}^n$ and set $\mathbb{B}=B(0,1)$, $S^{n-1}=\partial \mathbb{B}$. Denote by $|\,\cdot\,|$ and $\langle \cdot , \cdot \rangle$ the Euclidean norm and scalar product, respectively. Let $C\subset \mathbb{R}^n$ be a nonempty   set. We denote the \textit{interior} of $C$ by ${\rm int}\,C$ and the \textit{convex hull} of $C$ by ${\rm co}\,C$. The \textit{distance} from $x\in \mathbb{R}^n$ to $C$ is defined by $d_C(x):=\inf\{|x-y|\,:\,y\in C\}$. If $C$ is closed, we let $\Pi_C(x)$ be the set of all \textit{projections} of $x\in \mathbb{R}^n$ onto $C$.

For $p\in \mathbb{R}^+\cup \{\infty\}$ and a Lebesgue measurable set $I\subset \mathbb{R}$ we denote by $L^p(I;\mathbb{R}^n)$ the space of $\mathbb{R}^n$-valued Lebesgue measurable functions on $I$ endowed with the norm $\|\cdot\|_{p,I}$. We say that $f\in L^p_{{\rm loc}}(I;\mathbb{R}^n)$ if $f\in L^p(J;\mathbb{R}^n)$ for any compact subset $J\subset I$.   Let $I$ be an open interval in $\mathbb{R}$. For any $f\in L^1_{{\rm loc}}(\overline I;\mathbb{R}^n)$ we define  $\theta_f:[0,\mu(I))\to \bb R^+ $ by
\eee{
\theta_{ f}(\sigma)=\sup \left\{\int_{J} |{ f}(\tau)|\,d\tau \,:\, J\subset \overline I,\, \text{Lebesgue measure of }J\leqslant  \sigma \right\}.
}
We denote by $\mathcal L_{{\rm loc}}$ the set of all functions $f\in  L^1_{{\rm loc}}(\bb R^+   ;\mathbb{R}^+)$ such that  $\lim_{\sigma\to 0}\theta_{  f}(\sigma)=0$. Notice that $L^{\infty}(\bb R^+   ;\mathbb{R}^+)\subset \mathcal L_{{\rm loc}}$  and, for any $f\in \mathcal L_{{\rm loc}}$,    $\theta_{  f}(\sigma)<\infty$ for every   $\sigma>0$.

Let   $\Omega:\bb R\rightsquigarrow\bb R^n$, $F:\overline I\times \mathbb{R}^n\rightsquigarrow \mathbb{R}^n$, and $G:\mathbb{R}^m\rightsquigarrow \mathbb{R}^n$ be set-valued maps with nonempty  values. $G$ is said to be \textit{$L$-{Lipschitz continuous}}, for some $L\geqslant 0$, if $G(x)\subset G(\tilde x)+ L|x-\tilde x|\mathbb {B}$ for all $x,\,\tilde x\in \mathbb{R}^m$.  We say that $F$ has a \textit{sub-linear growth} (in $x$) if, for some $c\in L^1_{{\rm loc}}(\overline I;\mathbb{R}^+)$, 
\eee{\sup_{v\in F(t,x)}|v|\leqslant  c (t) (1+|x|) \quad \ttnn{a.e. }t\in \overline I,\; \forall x\in \mathbb{R}^n.}
\begin{definition}
Let $\gamma\in L^1_{{\rm loc}}(\overline I;\mathbb{R}^+)$. We say that $F$ is \textit{$\gamma$-{left absolutely continuous},  uniformly wrt $\Omega$}, if
\begin{equation}\label{abs_cont_G}
F(s,x)\subset F(t,x)+\int_s^t \gamma(\tau)\,d\tau \mathbb{B}\qquad \forall s,t\in \overline I \,:\, s<t,\, \forall x\in \cup_{\tau\in[s,t]}\Omega(\tau).
\end{equation}
If $F$ does not depends explicitely from $x$, in that case we simply say that $F$ is $\gamma$-left absolutely continuous. 
\end{definition}
If $\overline I=[S,T]$, then we have the following characterization of uniform absolute continuity from the left: $F$ is $\gamma$-left absolutely continuous, uniformly wrt $\Omega$, for some $\gamma\in L^1_{{\rm loc}}(\overline I;\mathbb{R}^+)$, if and only if for every $\varepsilon >0$ there exists $\delta>0$ such that for any finite partition $S\leqslant  t_1<\tau_1\leqslant  t_2<\tau_2\leqslant  ...\leqslant  t_m<\tau_m\leqslant  T$ of $[S,T]$,
\eee{
\sum_{i=1}^m
(\tau_i-t_i)<\delta \; \Longrightarrow \; \sum_{i=1}^m exc(F(t_i,x) |F(\tau_i,x)) <\varepsilon\quad \forall \; x\in \cup_{\tau\in[S,T]}\Omega(\tau)
}
where the \textit{excess} of $A$ given $B$ is defined by
\eee{exc(A|B):=\sup\{d_B(a)|a\in A\}\in \bb R^+\cup \{+\infty\}.}

\section{Controllability}

In what follows, we take the notation
%we consider the following controllability condition on the cosntraint functions: there exists $r,\delta \in \bb R^+$ such that for all $t\in \bb R^+,\, x \in  \partial \Omega\rt$ there exists $v\in {\rm co}\,\{{\bf f}(t,x,u):u\in U\rt\}$ satisfying
%\equazioneref{AAAA}{
%\langle \nabla h_i(t,x), v \rangle \leqslant  -r\qquad\forall\, i\in \cup_{z\in B(x,\delta)} I(z)
%}
%where $I(z)$ is the indexes activation set
%\eee{
%I(z)=\{i\in \{1,...,m\}\,:\, z\in \partial \Omega_i\rt\}
%}
%and
\eee{
\Omega\rt:=\bigcap_{i=1}^m \Omega_i\rt,\quad
\Omega_i(t):=\{x\in \mathbb{R}^n\,:\, h_i(t,x)\leqslant  0\}.
}

Consider the following condition
\begin{assumptions}\label{ass_h_i}
Let $\theta\in]0,1[$ and $h_i:\bb R^+ \times \mathbb{R}^n \to  \mathbb{R}$ be $m$ real-valued functions satisfying for any $i=1,...,m$:
\begin{itemize}
\item $h_i(.,x)$ is measurable for any $x$.
\item $h_i(t,.)$ is $\Gamma^{1,\theta}$ regular, uniformly wrt $t$.
\end{itemize}
\end{assumptions}

The Proposition below states a geometric result for a Inward Pointing Field Condition (also known as Inward Pointing Condition) on infinite horizon wrt the constraints $\Omega(t)$ and a vector fields $F(t,x)$.

\begin{prop}[Inward Pointing Fields Condition]\label{prop_ipc_g_i} Consider the Assumptions \ref{ass_h_i}. Let   $F:\bb R\times \bb R^n \rightsquigarrow \bb R^n$ be a set-valued map with nonempty  closed values satisfying
\eee{
&\exists M\geqslant 0, \varphi>0:\\
&(a) \; \sup\{|v|\,:\,v\in F(t,x),\, t \in \bb R^+, x\in \partial \Omega\rt\}\leqslant  M\\
&(b)\;F(t,\cdot) \ttnn{ is }\varphi \ttnn{-Lipschitz continuous for any } t\geqslant 0.
}
Assume that 
\equazioneref{viability_cond}{
&\ttnn{for some $\delta>0,\,r>0$ and for all $t\in \bb R^+ ,\,x\in \partial \Omega\rt$}\\
&\ttnn{there exists $v\in {\rm co}\,F(t,x)$ satisfying}\\
&\langle \nabla h_i(t,x), v \rangle \leqslant  -r\qquad\forall\, i\in \bigcup_{z\in B(x,\delta)} I(z)
}
where $I(z)=\{i\in I\,:\, z\in \partial \Omega_i(t)\}$ and $I:=\{1,...,m\}$. Then
\equazioneref{condition_InPoCon}{
&\ttnn{for some $\varepsilon >0,\,\eta>0$ and every $t\in \bb R^+,\,x\in (\partial \Omega(t)+\eta \mathbb{B})\cap \Omega(t)$}\\
&\ttnn{there exists $v\in {\rm co}\,F(t,x)$ satisfying}\\
&\{y+[0,\varepsilon ](v+\varepsilon  \mathbb{B})\,:\, y\in (x+\varepsilon  \mathbb{B})\cap \Omega(t)\}\subset \Omega(t).
}

\end{prop}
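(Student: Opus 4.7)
The plan is to first prove the geometric inclusion \eqref{condition_InPoCon} at boundary points $x\in\partial\Omega(t)$ with parameters uniform in $(t,x)$, and then extend the conclusion to a uniform tube $(\partial\Omega(t)+\eta\mathbb{B})\cap\Omega(t)$ by using the Lipschitz hypothesis (b) on $F$.

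For the boundary case, fix $(t,x)$ with $x\in\partial\Omega(t)$ and let $v\in{\rm co}\,F(t,x)$ be the vector supplied by \eqref{viability_cond}, so $|v|\leq M$ by (a). For $y\in(x+\varepsilon\mathbb{B})\cap\Omega(t)$, $w\in v+\varepsilon\mathbb{B}$ and $s\in[0,\varepsilon]$ I will verify $h_i(t,y+sw)\leq 0$ for every $i$, distinguishing two cases. For indices $i\in J:=\bigcup_{z\in B(x,\delta)}I(z)$ the fundamental theorem of calculus gives
\[
h_i(t,y+sw)=h_i(t,y)+\int_0^s\langle\nabla h_i(t,y+\tau w),w\rangle\,d\tau,
\]
and the integrand is majorized by
\[
\langle\nabla h_i(t,x),v\rangle + k\,|y+\tau w-x|^\theta\,|w| + \|\nabla h_i(t,\cdot)\|_\infty\,|w-v|.
\]
The first term is $\leq -r$ by hypothesis, while $|y+\tau w-x|\leq \varepsilon(1+M+\varepsilon)$, $|w-v|\leq\varepsilon$, $|w|\leq M+\varepsilon$, and both $k$ and $\|\nabla h_i(t,\cdot)\|_\infty$ are controlled uniformly in $(t,i)$ by the $\Gamma^{1,\theta}$ assumption. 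Hence one can select $\varepsilon_1>0$ independent of $(t,x)$ so that the two error terms sum to at most $r/2$, giving $\langle\nabla h_i(t,y+\tau w),w\rangle\leq -r/2$ and therefore $h_i(t,y+sw)\leq h_i(t,y)-sr/2\leq 0$. For indices $i\notin J$ the continuous function $h_i(t,\cdot)$ is nowhere zero on $B(x,\delta)$, and since $h_i(t,x)\leq 0$ this forces $h_i(t,\cdot)<0$ strictly on $B(x,\delta)$; then choosing $\varepsilon\leq\varepsilon_2$ (uniform, using $|v|\leq M$) so that $y+sw\in B(x,\delta)$ yields $h_i(t,y+sw)<0$.

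To extend to interior points, given $x\in(\partial\Omega(t)+\eta\mathbb{B})\cap\Omega(t)$ pick $x_0\in\partial\Omega(t)$ with $|x-x_0|\leq\eta$ and let $v_0\in{\rm co}\,F(t,x_0)$ be the vector furnished by the boundary case. The Lipschitz hypothesis (b) transfers to ${\rm co}\,F(t,\cdot)$, so there exists $v\in{\rm co}\,F(t,x)$ with $|v-v_0|\leq\varphi\eta$. Selecting $\varepsilon,\eta>0$ small enough that $\varepsilon+\eta$ and $\varepsilon+\varphi\eta$ are both bounded by $\varepsilon_0:=\min\{\varepsilon_1,\varepsilon_2\}$ ensures that every triple $(y,w,s)$ associated with $x$ also satisfies the range of validity for $x_0$, and the boundary conclusion then delivers $y+sw\in\Omega(t)$, which is \eqref{condition_InPoCon}. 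The main obstacle I anticipate is precisely this uniform bookkeeping in the H\"older error estimate: producing a single threshold $\varepsilon_0$ depending only on $r,M,\varphi,\delta$ and the uniform $\Gamma^{1,\theta}$ data of the $h_i$'s, so that the decrease $-r/2$ absorbs the remainder $k\,|y+\tau w-x|^\theta|w|$ at every boundary point simultaneously.
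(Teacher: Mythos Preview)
Your proposal is correct and follows essentially the same strategy as the paper: use the uniform $\Gamma^{1,\theta}$ data to propagate the bound $\langle\nabla h_i,v\rangle\leq -r$ to perturbed base points and directions, integrate (fundamental theorem of calculus / mean value theorem) for the active indices $i\in J$, handle the inactive indices by a connectedness argument inside $B(x,\delta)$, and use the Lipschitz hypothesis (b) on $F$ to produce the direction at tube points; the paper merely orders the steps differently (first transporting $v$ to the nearby point, then perturbing, then integrating). One small imprecision: for $i\notin J$ you assert that $h_i(t,\cdot)$ is \emph{nowhere zero} on $B(x,\delta)$, but what $i\notin J$ actually gives is $B(x,\delta)\cap\partial\Omega_i(t)=\emptyset$, which together with $x\in\Omega_i(t)$ and connectedness yields $B(x,\delta)\subset\Omega_i(t)$, i.e.\ $h_i(t,\cdot)\leq 0$ there --- and this (non-strict) inequality is all that is required.
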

\begin{proof}
Let us set $J(x):=\bigcup_{z\in B(x,\delta)} I(z)$ for all $x\in \partial \Omega\rs$ and $s\mageq 0$. Fix $t\in \bb R^+,\,x\in \partial \Omega\rt$, and  $v\in {\rm co}\,F(t,x)$ satisfying $\langle \nabla h_i(t,x), v \rangle\leqslant  -r$ for all $i\in J(x)$. Pick 
\eee{
&k> \max_{i\in I} \sup_{x\neq y}\frac{|\nabla h_i(t,x)-\nabla h_i(t,y)|}{|x-y|^\theta}\\
&L> \max_{i\in I}\sup_{x\in \mathbb{R}^n}|\nabla h_i(t,x)|.
}
We proceed by steps.

{(i)}: We claim that there exists $\eta'>0$, not depending on $(t,x)$, such that for all $y\in B(x,\eta')$ we can find $w\in {\rm co}\,F(t,y)$, with $|w-v|\leqslant  r/4L$, satisfying for all $i\in J(x)$,
\eee{
\langle \nabla h_i(t,y), w \rangle \leqslant  -r/2.}
Indeed, for all $i\in J(x)$ and $y \in B(x,\sqrt[{\theta}]{r/4kM})$ we have 
\eee{
\langle\nabla h_i(t,y),v \rangle&= \langle \nabla h_i(t,y)-\nabla h_i(t,x), v \rangle+\langle\nabla h_i(t,x),v \rangle\\
&\leqslant  kM|y-x|-r\leqslant  - \frac{3r}{4} }
and
for all $w\in \mathbb{R}^n$ such that $|w-v|\leqslant  r/4L$  
\eee{
\langle \nabla h_i(t,y), w \rangle  &= \langle\nabla h_i(t,y), w-v\rangle + \langle\nabla h_i(t,y), v \rangle\\
&\leqslant  L|w-v|-3r/4\leqslant  - \frac{r}{2}.
}
  Since $F(t,\cdot)$ is $\varphi$-Lipschitz continuous,  there exists $w\in {\rm co}\,F(t,y)$ such that $|w-v|\leqslant  r/4L$ whenever $|y-x|\leqslant  r/4\varphi L$. So the claim follows with $\eta'=\min \{r/4\varphi L, \sqrt[\theta]{r/4kM}\}$.

{(ii)}: We claim that there exists $\varepsilon '>0$, not depending on $(t,x)$, such that for all $y \in B(x,\eta')$ we can find $w\in {\rm co}\,F(t,y)$ such that
\eee{
\langle\nabla h_i(t,z),\tilde w \rangle \leqslant  -r/4 \qquad \forall\, z\in B(y,\varepsilon ')\,\forall\, \tilde w \in B(w,\varepsilon ')\, \forall\, i\in J(x).
}
Indeed, let $y\in B(x,\eta')$ and $w\in{\rm co}\, F(t,y)$ be as in (i). Then for any $\tilde w\in \mathbb{R}^n$ such that $|\tilde w-w|\leqslant  r/8L$  and  for all $i\in J(x)$ and $z\in \mathbb{R}^n$,
\eee{
&\langle\nabla h_i(t,z),\tilde w \rangle \\
&=\langle\nabla h_i(t,z)-\nabla h_i(t,y),\tilde w \rangle +\langle \nabla h_i(t,y),\tilde w-w \rangle+ \langle \nabla h_i(t,y), w \rangle\\
&\leqslant  k (M+r/4L+r/8L)|z-y|+r/8-r/2.
}
So the claim follows with $\varepsilon '= \min \{k^{-1}(M+r/2L)^{-1} r/8,\,r/8L\}$.

{(iii)}: We prove that there exist $\eta>0,\,\varepsilon >0$, not depending on $(t,x)$, such that for all $y\in B(x,\eta)\cap \Omega\rt$ we can find $w\in {\rm co}\,F(t,y)$ satisfying
\begin{equation}\label{z+tau_tilde_w}
z+\tau \tilde w\in \Omega\rt\qquad \forall\, z\in B(y,\varepsilon )\cap \Omega\rt,\, \forall\, \tilde w\in B(w,\varepsilon ),\, \forall\, 0\leqslant  \tau\leqslant  \varepsilon .
\end{equation}
Let $y\in B(x,\eta')\cap \Omega\rt$ and $w\in {\rm co}\, F(t,y)$ be as in (ii). Then, by the mean value theorem, for any $\tau\geqslant 0$, any $z\in B(y,\varepsilon ')\cap  \Omega\rt$, any $\tilde w\in B(w,\varepsilon ')$, and any $i\in J(x)$ there exists $\sigma_\tau\in [0,1]$ such that
\eee{
h_i(t,z+\tau \tilde w)&=h_i(t,z)+\tau \langle \nabla h_i(t,z+\sigma_\tau \tau \tilde w), \tilde w \rangle \\
&\leqslant  \tau \langle \nabla h_i(t,z),\tilde w \rangle  +k(M+r/4L+\varepsilon ')^2\tau^2\\
&\leqslant  -\frac{r\tau}{4}+k(M+r/4L+\varepsilon ')^2\tau^2.
}
Choosing $\eta\in]0, \eta']$ and $\varepsilon \in]0, \varepsilon ']$ such that $\eta+\varepsilon (M+r/4L+\varepsilon )\leqslant  \delta$ and $\varepsilon \leqslant  {k^{-1}(M+r/4L+\varepsilon ')^{-2}r/4}$, it follows that for all $z\in B(y,\varepsilon )\cap \Omega(t),\,\tilde w\in B(w,\varepsilon )$, and all $0\leqslant  \tau\leqslant  \varepsilon $
\begin{equation}\label{uno}
z+\tau \tilde w\in B(x,\delta)
\end{equation}
and
\begin{equation}\label{UNO}
h_i(t,z+\tau \tilde w)\leqslant  0\qquad \forall\, i\in J(x).
\end{equation}
Furthermore, by (\ref{uno}) and since $B(x,\delta)\subset \Omega_j(t)$ for all $j\in I\backslash J(x)$, we have for all $z\in B(y,\varepsilon )\cap \Omega\rt,\,\tilde w\in B(w,\varepsilon )$, and all $0\leqslant  \tau\leqslant  \varepsilon $
\begin{equation}\label{DUE}
h_i(t,z+\tau \tilde w)\leqslant  0\qquad \forall\, i\in I\backslash J(x).
\end{equation}
The conclusion follows from (\ref{UNO}) and (\ref{DUE}).
\\
\end{proof}

\section{Viability and Distance Estimates on Trajectories}
We provide here sufficient conditions for uniform linear $L^\infty$ estimates on intervals of the form $I=[t_0,t_1]$, with $0\leqslant  t_0<t_1$, for the state constrained differential inclusion
\eee{
&x' (t) \in F(t,x (t) )\quad {\ttnn{a.e. } }  t\in I\\
&x (t)  \in \Omega\rt\quad \forall\, t\in I
 }
where $F:\bb R^+   \times \mathbb{R}^n \rightsquigarrow \mathbb{R}^n$ is a given set-valued map. A  function $x:[t_0,t_1] \to  \mathbb{R}^n$ is said to be:
\begin{itemize}
\item  \textit{F-trajectory} if it is absolutely continuous and $x' (t)  \in F(t,x (t) )$ for a.e. $t\in [t_0,t_1]$.
 \item \textit{feasible F-trajectory} if $x (\cdot) $ is an $F$-trajectory and $x (t)  \in \Omega\rt $ for all $  t\in I$.
\end{itemize}

\begin{assumptions}\label{Ass_F} We assume the following on $F(\cdot,\cdot)$:
\begin{enumerate} 
\item $F$ has closed and nonempty  values, a sub-linear growth, and $F(\cdot,x)$ is Lebesgue measurable for all $x\in \mathbb{R}^n$.
\item\label{M} There exist $M\geqslant 0$ and $\alpha>0$ such that
\eee{
\sup \{|v|\,:\,v \in F(t,x),\,t\in \bb R^+,\,x\in {\partial \Omega\rt +\alpha \mathbb{B}}\} \leq  M.
}
\item\label{lipF} There exists $\varphi\in \mathcal L_{{\rm loc}}$ such that $F(t,\cdot)$ is $\varphi (t) $-Lipschitz continuous for all $t\in \mathbb{R}^+$.
\item\label{AC} There exist $\tilde \eta>0$ and $\gamma\in \mathcal L_{{\rm loc}}$ such that $F$ is $\gamma$-left absolutely continuous, uniformly wrt ${\partial \Omega +\tilde \eta \mathbb{B}}$.

\end{enumerate}

\end{assumptions}

%
%We shall also need the following two assumptions:
%
%\begin{itemize}
%
%\item [\rif{condition_InPoCon} ] for some $\varepsilon >0,\,\eta>0$ and every $(t,x)\in \bb R^+   \times (\partial \Omega +\eta \mathbb{B})\cap \Omega $ there exists $v\in {\rm co}\,F(t,x)$ satisfying
%\begin{equation}\label{IPC}
%\{y+[0,\varepsilon ](v+\varepsilon  \mathbb{B})\,:\, y\in (x+\varepsilon  \mathbb{B})\cap \Omega \}\subset \Omega .
%\end{equation}
%\end{itemize}

Before to state the main result of this section, we recall a definition and a viability result for tubes (\cite{BASCO2022126452}-Corollary 4.5).

\begin{definition}\label{def_lbv_svm}\rm
Consider a closed interval $I\subset \bb R$. We say that a set-valued map $\Phi:I\rightsquigarrow \bb R^k$ is of \textit{locally bounded variations} if:
\begin{itemize}
\item $\Phi$ takes nonempty  closed images.
\item For any $[a,b]\subset I$
\eee{\sup \;\sum_{i=1}^{m-1} exc(\Phi(t_{i+1})\cap \scr K| {\Phi(t_i)})\vee exc(\Phi(t_i)\cap \scr K|{\Phi(t_{i+1})})<+\infty}
\end{itemize}
where the supremum is taken over all compact subset $\scr K\subset \bb R^k$ and all finite partition $a=t_1< t_2< ...< t_{m-1}<t_m= b$. 
\end{definition}

In the next result, we need to recall the definition of Boulingad (or contingent) cone. Consider a closed set $G\subset\bb R^n$. The Boulingad tangent cone at $x\in G$ is defined by $\scr T_G(x):=\{ v\in \bb R^n:\exists t_i\ra 0+, \exists v_i\ra v, x+t_iv_i \in G \,\forall i \}$.

\begin{prop}[Existence of Viable Trajectories, \cite{BASCO2022126452}]\label{prop_viability}
Let $E:\bb R^+ \rightsquigarrow  \bb{R}^{d}$ be continuous\footnote{in the sense of set-valued maps, see e.g. \cite{aubin2009set}-Section 1.4.}, of locally bounded variations in the sense of Definition \ref{def_lbv_svm},  and consider $\Phi:\bb R^+ \times \bb R^d\rightsquigarrow \bb R^d$ a set-valued map with nonempty  convex closed values such that:
%\sistemanoref{
% g(.,x,\theta) \ttnn{ measurable for any $x,\theta $};\\
% \exists \varrho\in \ccal L^1_{loc}(\bb R^+ ),\, \sup_{\theta\in \bb B^{m}}|g(t,x,\theta)|\mineq \varrho(t)(1+|{x}|); \\
% \forall r>0\,\exists \,k_r\in \ccal L^1_{loc}(\bb R^+ ;\bb R^+ ) \ttnn{ such that }\; \\
% \; |g(t,x,\theta) -g(t,y,\theta)|\mineq k_r\rt|x-y|,\forall t\in \halfline, \forall x,y\in r\bb B^{d}, \forall \theta\in \bb B^m.
%}
\eee{
-&\quad \Phi(.,x) \ttnn{ is measurable for any $x $}\\
-&\quad \exists \varrho\in  L^1_{loc}(\bb R^+ ;\bb R^+ )\\
&\quad \forall r>0\,\exists \,k_r:\halfline \ra \halfline \ttnn{ locally bounded: for a.e. } t\\
& \sup_{v\in \Phi (t,x)}|v|\mineq \varrho(t)(1+|{x}|) \ttnn{ and }
 \Phi(t,.) \textnormal{ is $k_r(t)$-Lipschitz on } r\bb B.
}
If for a.e. $t>0$ and all $y\in E(t)$ it holds
\equazioneref{non_triviality}{
{\ttnn{cl co }} \scr T_{\text {graph } E}(t, y)\cap (\{1\} \times \Phi(t,y))\neq \emptyset,}
then for any $t_0\in \bb R^+ $ and $x_0\in E(t_0)$ there exists an absolutely continuous viable solution
\eee{
&x'\rt\in \Phi(t,x\rt)\; \text{ a.e. }t>t_0\\
&x(t_0)=x_0\\
&x(t) \in E(t)\quad \forall t>t_0.
}

\end{prop}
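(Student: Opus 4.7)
The idea is to reduce this time-dependent viability problem to an autonomous viability problem on $\ttnn{graph}\,E$, to build approximate viable curves by a measurable-selection Euler scheme adapted to the merely measurable time dependence of $\Phi$, and then to extract a limit which is both an admissible trajectory of $\Phi$ and viable in $E$. Set $K:=\ttnn{graph}\,E\subset \bb R^+\times \bb R^d$; this set is closed since $E$ is continuous with closed values. Define $\tilde \Phi:K\rightsquigarrow \bb R\times \bb R^d$ by $\tilde \Phi(s,y):=\{1\}\times \Phi(s,y)$: the conclusion amounts to producing an absolutely continuous selection of $\tilde \Phi$ starting at $(t_0,x_0)$ and remaining in $K$, and the hypothesis \eqref{non_triviality} is exactly the tangential compatibility of $\tilde \Phi$ with $K$ at a.e.\ time.

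Then I would fix a finite horizon $[t_0,t_0+T]$ and, by sub-linear growth together with Gronwall, a radius $R>0$ containing every $\Phi$-trajectory starting at $x_0$. For each $n$, take a partition $t_0=s_0<\dots<s_{N_n}=t_0+T$ of mesh tending to zero. At each node $(s_i,x_i^n)\in K$ I invoke \eqref{non_triviality} together with a measurable selection theorem to pick $v_i\in \Phi(s_i,x_i^n)$ realising a tangent direction, set $x_{i+1}^n:=x_i^n+(s_{i+1}-s_i)v_i$, and project onto $E(s_{i+1})$. The local Lipschitz constant $k_R(\cdot)$ in space controls the error of freezing $x$ on each subinterval, while the locally bounded variation of $E$ bounds the sum of successive one-sided excesses $\ttnn{exc}(E(s_i)\cap R\bb B\,|\, E(s_{i+1}))$ and hence the cumulative projection error.

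Next I would pass to the limit: the piecewise affine Euler curves $x_n(\cdot)$ are uniformly bounded in $R\bb B$ and their derivatives are uniformly integrable by the envelope $\varrho(\cdot)(1+R)$, so Ascoli--Arzel\`a and Dunford--Pettis yield a subsequence with $x_n\to x$ uniformly and $x_n'\rightharpoonup x'$ weakly in $L^1([t_0,t_0+T])$. Convexity of $\Phi(t,\cdot)$ combined with its Carath\'eodory structure gives $x'(t)\in \Phi(t,x(t))$ a.e.\ via the standard convergence theorem for differential inclusions, while continuity of $E$ and the vanishing total projection error force $x(t)\in E(t)$ for every $t\in[t_0,t_0+T]$. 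Finally, a Zorn-type argument on the family of viable extensions (ordered by inclusion) produces a maximal viable trajectory; since sub-linear growth and Gronwall preclude finite-time blow-up of $|x(t)|$, the maximal interval is necessarily $[t_0,+\infty)$.

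The hard part will be the approximation step, namely the quantitative control of the projection error as the mesh is refined. This is where the locally bounded variation of $E$ in the sense of Definition \ref{def_lbv_svm} is indispensable, as it bounds the total one-sided excess between consecutive images $E(s_i)$ and $E(s_{i+1})$; together with the tangential condition \eqref{non_triviality} this keeps the Euler iterates within an arbitrarily thin tube around $K$, which is precisely what one needs to close the limit argument in the previous paragraph.
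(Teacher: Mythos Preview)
The paper does not contain a proof of this proposition: it is explicitly recalled from \cite{BASCO2022126452} (Corollary 4.5 there), as announced just before the statement (``we recall a definition and a viability result for tubes (\cite{BASCO2022126452}-Corollary 4.5)''). There is therefore no in-paper argument to compare your sketch against.

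Your outline is broadly the standard viability-through-Euler-scheme strategy, and the ingredients you single out (autonomisation via $\ttnn{graph}\,E$, measurable selections, sub-linear growth plus Gronwall for compactness, convexity of the values for the weak-$L^1$ closure, Zorn/continuation to reach $+\infty$) are the right ones. Two points deserve care if you actually carry this out. First, the tangential condition \eqref{non_triviality} is assumed only for a.e.\ $t>0$, so at a node $s_i$ falling in the exceptional null set you cannot directly select a tangent $v_i$; the scheme must average over a small time interval (e.g.\ select a measurable $v(\cdot)$ with $(1,v(t))\in \ttnn{cl\,co}\,\scr T_K(t,x_i^n)\cap \tilde\Phi(t,x_i^n)$ for a.e.\ $t$ and integrate) rather than evaluate at the nodes. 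Second, the locally bounded variation hypothesis bounds $\sum_i \ttnn{exc}(E(s_{i+1})\cap\scr K\,|\,E(s_i))\vee \ttnn{exc}(E(s_i)\cap\scr K\,|\,E(s_{i+1}))$ uniformly in the partition, but it does \emph{not} make this sum tend to zero as the mesh shrinks; you need this quantity only to be finite so that the per-step projection error, which is $o(\Delta s_i)$ thanks to the tangency, telescopes against a bounded-variation modulus rather than blowing up. Your last paragraph slightly overstates what the BV bound delivers on its own.
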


\begin{rema}
\
\begin{enumerate}
\item Proposition \ref{prop_viability} extends classical viability results under restricted conditions on the regularity of the tube $E$ (we refer the interested reader to the bibliography therein \cite{BASCO2022126452}). Furthermore, it is straightforward to see that Lipschitz continuity for set-valued maps imply the locally bounded variations property.
\item We notice that, whenever Proposition \ref{prop_ipc_g_i} applies, then condition \rif{viability_cond} on $\Phi(t,x)=\{{\bf{f}}(t,x,u):u\in U\rt\}$ ensure the non-triviality intersection \rif{non_triviality} for $E\rt=\Omega\rt$.
 \end{enumerate}
\end{rema}

We have the following 
\begin{theo}[Neighboring Trajectories Estimates]\label{main_theo}
Consider Assumptions \ref{Ass_F}. Suppose that $h_i$'s satisfy the viability condition \rif{viability_cond} and there exists $L\mageq 0$ such that 
\equazioneref{omega_lip_cont}{\ttnn{the set-valued map $\Omega:\bb R^+\rightsquigarrow \bb R^n $ is $L$-Lipschitz continuous.}}
%i.e.
%\eee{
%\Omega\rt\subset\Omega\rs +L|s-t|\bb B.
%}
 Then for every $\delta>0$ there exists a constant $\beta>0$ such that for any $[t_0,t_1]\subset \bb R^+   $ with $t_1-t_0 = \delta$, any $F$-trajectory $\hat x (\cdot) $ defined on $[t_0,t_1]$ with $\hat x(t_0)\in \Omega(t_0)$, and any $\varrho>0$ satisfying
\eee{
 \sup_{t\in [t_0,t_1]}d_{\Omega(t)}(\hat x (t) )\mineq \varrho
}
we can find an $F$-trajectory $x (\cdot) $ on $[t_0,t_1]$ such that
\eee{
&x(t_0)=\hat x(t_0)\\
&\|\hat x-x\|_{\infty,[t_0,t_1]}\leqslant  \beta \varrho\\
&x (t)  \in {\rm int}\; \Omega\rt \qquad\forall\, t\in]t_0,t_1].
}
 
\end{theo}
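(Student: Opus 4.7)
The plan is to upgrade the viability condition \rif{viability_cond} on the $h_i$'s to the quantitative inward-cone property \rif{condition_InPoCon} via Proposition \ref{prop_ipc_g_i}, and then exploit this cone property to push $\hat x(\cdot)$ strictly inside the tube $\Omega(\cdot)$ via a Filippov-Frankowska-Rampazzo type neighboring trajectory construction adapted to the time-dependent setting.

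First, I would verify that Assumptions \ref{Ass_F}(\ref{M})--(\ref{lipF}) supply the uniform bound $M$ on $\partial\Omega+\alpha\mathbb{B}$ and the Lipschitz constant for $F(t,\cdot)$ required by Proposition \ref{prop_ipc_g_i}, obtaining constants $\varepsilon_0, \eta_0>0$ independent of $(t,x)$ such that, from every $(t,x)$ with $x\in(\partial\Omega(t)+\eta_0\mathbb{B})\cap\Omega(t)$, an inward velocity $v_{t,x}\in {\rm co}\,F(t,x)$ may be selected enjoying the cone property in \rif{condition_InPoCon}. Using a measurable selection theorem I would then build an $L^1$ selection $w(\cdot)\in {\rm co}\,F(\cdot,\hat x(\cdot))$ that is inward-pointing whenever $\hat x(t)$ lies within $\eta_0$ of $\partial\Omega(t)$, and set $y(t):=\hat x(t_0)+\int_{t_0}^t w(s)\,ds$. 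On a partition of $[t_0,t_1]$ of mesh comparable to $\varepsilon_0$, the cone property together with the Lipschitz continuity \rif{omega_lip_cont} of $\Omega(\cdot)$ and a Gronwall estimate yield $y(t)\in {\rm int}\,\Omega(t)$ for $t>t_0$ as well as $\|\hat x-y\|_{\infty,[t_0,t_1]}$ linear in $\varrho$ (using that $\hat x(t_0)\in\Omega(t_0)$ to control the starting error). Filippov's relaxation theorem applied with the $\varphi(t)$-Lipschitz hypothesis on $F$ (Assumption \ref{Ass_F}(\ref{lipF})) converts $y$ into a genuine $F$-trajectory $x$ at the cost of an arbitrarily small additional perturbation controlled by $\exp\int_{t_0}^{t_1}\varphi(s)\,ds<\infty$ (finite since $\varphi\in\mathcal L_{{\rm loc}}$), and the interior condition is preserved on $]t_0,t_1]$ by continuous dependence on initial data together with the strict positivity of the inward push provided by \rif{condition_InPoCon}. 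Existence of viable continuations used in the patching follows from Proposition \ref{prop_viability} applied to $E(t)=\Omega(t)$, whose non-triviality \rif{non_triviality} is ensured by \rif{viability_cond} as noted in the remark and whose locally bounded variation and continuity follow from \rif{omega_lip_cont}.

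The main technical obstacle is maintaining the linear-in-$\varrho$ estimate while simultaneously forcing $x(t)\in {\rm int}\,\Omega(t)$ on all of $]t_0,t_1]$: Proposition \ref{prop_ipc_g_i} only supplies a local $\varepsilon_0$-scale push, and a naive iteration across the partition could inflate $\|\hat x-x\|_\infty$ by a factor growing with the number of patches. This requires careful bookkeeping using the $\mathcal L_{{\rm loc}}$ moduli of $\varphi$ and $\gamma$ arising from Assumption \ref{Ass_F}(\ref{lipF})--(\ref{AC}), which are also precisely the moduli needed to invoke Proposition \ref{prop_viability} on each sub-interval for propagating feasibility. A secondary point is that the IPC delivers a selection from ${\rm co}\,F$ rather than from $F$ itself, which is exactly why Filippov's relaxation theorem is the decisive tool in the last step. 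The resulting constant $\beta$ will depend only on $\delta,L,M,\varepsilon_0,\eta_0$ and the $\mathcal L_{{\rm loc}}$ moduli of $\varphi$ and $\gamma$, as required.
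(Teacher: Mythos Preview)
Your overall architecture---upgrade \rif{viability_cond} to the cone property \rif{condition_InPoCon} via Proposition \ref{prop_ipc_g_i}, build an intermediate curve $y$, track it by Filippov, patch over a partition, and de-convexify at the end---matches the paper's, and the obstacle you flag (multiplicative growth across patches) is handled exactly by the $\mathcal L_{\rm loc}$ bookkeeping you anticipate. However, your construction of $y$ has a genuine gap at the single-patch level, before any iteration. You take $w(t)=\hat x'(t)$ when $\hat x(t)$ is at distance $\geqslant\eta_0$ from $\partial\Omega(t)$ and $w(t)$ an inward selection otherwise; then
\[
\|\hat x-y\|_{\infty,[t_0,t_1]}\leqslant 2M\cdot\mathrm{meas}\bigl\{t\in[t_0,t_1]:\hat x(t)\in\partial\Omega(t)+\eta_0\mathbb{B}\bigr\},
\]
and this measure is \emph{not} controlled by $\varrho$: a trajectory can graze the boundary for the entire interval while violating by an arbitrarily small amount. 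So the assertion that $\|\hat x-y\|_\infty$ is linear in $\varrho$ is unsupported, and this is precisely the heart of the theorem.

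The paper's key device, missing from your proposal, is to make the \emph{duration} of the inward push proportional to $\varrho$ rather than switching based on boundary proximity. On a short interval one fixes a single $v\in\mathrm{co}\,F(t_0,\hat x(t_0))$ from \rif{condition_InPoCon} and sets $y'(t)=v$ for $t\in[t_0,t_0+k\varrho]$ and $y'(t)=\hat x'(t-k\varrho)$ for $t\in(t_0+k\varrho,t_1]$, i.e.\ push inward for time $k\varrho$ and then follow a time-shifted copy of $\hat x$. This immediately gives $\|\hat x-y\|_\infty\leqslant 2Mk\varrho$, and the cone inclusion in \rif{condition_InPoCon}, applied at projected base points $\pi(t)\in\Pi_{\Omega(t)}(\hat x(t-k\varrho))$ with $|\hat x(t-k\varrho)-\pi(t)|\leqslant\varrho$, yields $y(t)+(k\varepsilon-1)\varrho\,\mathbb{B}\subset\Omega(t)$; Filippov then delivers a strictly interior $\mathrm{co}\,F$-trajectory with the linear estimate. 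The large-$\varrho$ regime is treated separately via Proposition \ref{prop_viability}, and only afterwards does the partition-and-iterate argument enter. Finally, note that two distinct tools are used in sequence: Filippov's tracking theorem to pass from $y$ to a $\mathrm{co}\,F$-trajectory, and then a separate relaxation lemma to remove the convex hull while preserving strict feasibility---your wording ``Filippov's relaxation theorem'' conflates them.
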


\begin{proof}
Fix $\delta>0$ and let $[t_0,t_1]\subset \bb R^+$ with $t_1-t_0=\delta$.

We first show the statement whenever $F=\ttnn{co }F$.
Let
\begin{equation}\label{Delta}
\eps>0,\, k>0,\,\Delta>0,\,\bar \varrho>0,{\rm and } \;\;m\in \mathbb{N}^+
\end{equation}
be such that
\begin{equation}\label{cond1} \begin{split}
   \Delta\leqslant  \varepsilon,\quad
    \bar \varrho+M\Delta <\varepsilon ,\quad k\bar \varrho <\varepsilon, \quad k>1/\varepsilon,\quad
    4\Delta M\leqslant \hat \eta
 \end{split}\end{equation}
\begin{equation}\label{cond2} \begin{split}
  e^{\theta_\varphi(\Delta)}(\theta_\gamma(\Delta)+\theta_\varphi(\Delta)M)<\varepsilon,\;
  2e^{\theta_\varphi(\Delta)}(\theta_\gamma(\Delta)+\theta_\varphi(\Delta)M)k<(k\varepsilon -1)
 \end{split}\end{equation}
and
\begin{equation}\label{cond3}
\frac{\delta}{m}\leqslant  \Delta.
\end{equation}
Notice that all the constants appearing in (\ref{Delta}) do not depend on the time interval $[t_0,t_1]$, the trajectory $\hat x (\cdot) $, and $\varrho$.

1) $\varrho\leqslant  \bar \varrho$ and $\delta\leqslant  \Delta$.

\noindent We observe that, by the last inequality in (\ref{cond1}), if 
\eee{\hat x(t_0)\in \Omega(t_0) \backslash(\partial \Omega(t_0)+\frac{\hat \eta }{2}\mathbb{B})}
then $x (\cdot) =\hat x (\cdot) $ is as desired. Indeed, without loss of generality, assume $\hat x(t_0)\in (\partial \Omega(t_0)+\frac{\hat \eta }{2}S^{n-1} )\cap \Omega(t_0)$ and suppose by contraddiction that
\eee{
%&\cup_{\eps>0} \{\hat x (s+\eps)\} \cap (\bb R^n\backslash A(s+\eps))\neq \emptyset\\
\ccal R:=\{ t\in ]t_0,t_1] : \hat x(t)\in  \ttnn{cl }(\bb R^n\backslash \Omega\rt )\}\neq \emptyset.
}
Put $s:=\inf R$ and notice that $s\neq t_0$. Then, we have
\eee{
d_{\partial \Omega(t_0)}(\hat x(s))\mineq \ttnn{dist}({\partial \Omega(t_0)}, \partial \Omega\rs )+d_{\partial \Omega\rs}( \hat x(s))\mineq L  |s-t_0|
}
where $\ttnn{dist}(A,B)$ stands for the standard Euclidean distance between two sets $A$ and $B$. Since
\eee{
d_{\partial \Omega(t_0)}( \hat x(t_0))-d_{\partial \Omega(t_0)}(\hat x(s))\mineq |\hat x(s)-\hat x(t_0)|
}
it follows that
\eee{
\hat \eta/4-L  |s-t_0|\mineq M\Delta\mineq \hat \eta/4,
}
a contraddiction. Next we assume that $\hat x(t_0)\in (\partial \Omega(t_0)+\frac{\hat \eta }{2}\mathbb{B})\cap \Omega(t_0) $.\\
\noindent  From Proposition \ref{prop_ipc_g_i}, let $v\in F(t_0,\hat x(t_0))$ be as in \rif{condition_InPoCon}  and define
\eee{y:[t_0,t_1] \to  \mathbb{R}^n}
 by 
\begin{equation}\label{ydef}
y(t_0)=\hat x(t_0),\quad 
y' (t) = \left\{ \begin{array}{ll}
v &t\in[t_0,(t_0+k\varrho)\wedge t_1]\\
\hat x'(t-k\varrho)&t\in]t_0+k\varrho,t_1]\cap J
\end{array}\right.
\end{equation}
where $J=\{s\in]t_0+k\varrho,t_1]\,:\,\hat x'(s-k\varrho)\; \; {\rm exists}\}$. Hence
\begin{equation}\label{stima_y_1}
\|\hat x -y\|_{\infty,[t_0,t_1]}\leqslant  2Mk\varrho.
\end{equation}
By Filippov's theorem (cfr. \cite{aubin2009set}) there exists an $F$-trajectory $x (\cdot) $ on $[t_0,t_1]$ such that $x(t_0)=y(t_0)$ and
\begin{equation}\label{filippov}
\|y-x\|_{\infty,[t_0,t]}\leqslant  e^{\int_{t_0}^t \varphi (\tau)\,d\tau}{\int_{t_0}^t d_{F(s,y(s))}(y'(s))}\,ds
\end{equation}
for all $t\in [t_0,t_1]$. Then, using Assumptions \ref{Ass_F}-\ref{lipF}, (\ref{abs_cont_G}), and (\ref{ydef}), it follows that
\begin{equation*}
\qquad \;\;d_{F(s,y(s))}(y'(s))\leqslant \left\{ \begin{array}{ll}
\theta_\gamma(\Delta)+\varphi(s)M(s-t_0) &{\rm a.e. }\; s\in[t_0,(t_0+k\varrho)\wedge t_1]\\
\varphi(s)Mk\varrho+\int_{s-k\varrho}^{s}\gamma(\tau)\,d\tau &{\rm a.e. }\; s\in]t_0+k\varrho,t_1].
\end{array}\right.
\end{equation*}
Hence, we obtain for any $t\in[t_0,(t_0+k\varrho)\wedge t_1]$
\eee{
\int_{t_0}^t d_{F(s,y(s))}(y'(s))\,ds\leqslant  (\theta_\gamma(\Delta)+\theta_\varphi(\Delta)M)(t-t_0)
}
and, using the Fubini theorem, for any $t\in]t_0+k\varrho,t_1]$
\eee{
\int_{t_0+k\varrho}^t d_{F(s,y(s))}(y'(s))\,ds\leqslant  (\theta_\varphi(\Delta)M+\theta_\gamma(\Delta))k\varrho.
}
Thus, by (\ref{filippov}), for all $t\in[t_0,(t_0+k\varrho)\wedge t_1]$
\begin{equation}\label{stima_t_t_0}
\|y-x\|_{\infty,[t_0,t]}\leqslant  e^{\theta_\varphi(\Delta)}(\theta_\gamma(\Delta)+\theta_\varphi(\Delta)M)(t-t_0)
\end{equation}
and
\begin{equation}\label{x-y}
\|y-x\|_{\infty,[t_0,t_1]}\leqslant  2e^{\theta_\varphi(\Delta)}(\theta_\gamma(\Delta)+\theta_\varphi(\Delta)M)k\varrho.
\end{equation}
Finally, taking note of (\ref{stima_y_1}), it follows that
\eee{
\|\hat x-x\|_{\infty,[t_0,t_1]}\leqslant  \beta_1 \varrho,
}
where we put $\beta_1=2 (M+e^{\theta_\varphi(\Delta)}(\theta_\gamma(\Delta)+\theta_\varphi(\Delta)M))k$.

 We claim next that
\eee{
x (t) \in {\rm int}\,\Omega\rt  \quad  \forall t\in]t_0,t_1].
}
Indeed, if  $t\in]t_0,(t_0+k\varrho)\wedge t_1]$, then from \rif{condition_InPoCon}, the first condition in (\ref{cond1}), and (\ref{ydef}) it follows that
\eee{
y (t) +(t-t_0)\varepsilon  \mathbb{B}=\hat x(t_0)+(t-t_0)(v+\varepsilon  \mathbb{B})\subset \Omega(t)
}
and it is enough to use (\ref{stima_t_t_0}) and the first inequality in (\ref{cond2}).

\noindent On the other hand, if $t\in]t_0+k\varrho,t_1]$, then for $
\pi (t)  \in \Pi_{\Omega\rt}(\hat x(t-k\varrho))
$ we have  $|\hat x(t-k\varrho)-\pi (t) |= d_{\Omega\rt}(\hat x(t-k\varrho))\leqslant  \varrho$, and, from (\ref{ydef}), it follows that
\begin{equation}\label{B1}
y (t)  \in \pi (t)  +k\varrho v +\varrho \mathbb{B}.
\end{equation}
Now, since $|\pi (t)  -\hat x(t_0)|\leqslant  |\hat x(t-k\varrho)-\pi (t) |+ |\hat x(t-k\varrho)-\hat x(t_0)| \leqslant  \bar \varrho+M\Delta$, from Proposition \ref{prop_ipc_g_i}  and the $2^{\ttnn{nd}}$ inequality in (\ref{cond1})
\begin{equation}\label{B2}
\pi (t) +k\varrho v+k\varrho \varepsilon  \mathbb{B}=\pi (t)  +k \varrho (v+\varepsilon  \mathbb{B})\subset \Omega\rt.
\end{equation}
Finally, (\ref{B1}) and (\ref{B2}) imply that
$
y (t) +(k\varepsilon -1)\varrho \mathbb{B}\subset \Omega\rt.
$
So, the claim follows from (\ref{cond2})-$(ii)$ and (\ref{x-y}).

2) $\varrho> \bar \varrho$ and $\delta\leqslant  \Delta$.

\noindent By Proposition \ref{prop_viability}, there exists a feasible $F$-trajectory $\bar x (\cdot) $ on $[t_0,t_1]$ starting from $\hat x(t_0)$. Note that $d_{\Omega\rt}(\bar x(t))=0$ for all $t\in [t_0,t_1]$. By the Case 1, replacing $\hat x (\cdot) $ with $\bar x (\cdot) $, it follows that there exists a feasible $F$-trajectory $x (\cdot) $ on $[t_0,t_1]$ such that $x(t_0)=\hat x(t_0)$ and $x\rt\in \ttnn{int }\Omega\rt$ for all $t\in]t_0,t_1]$. Hence, by Assumption \ref{Ass_F}-\ref{M}, we have
$
\|\hat x-x\|_{\infty,[t_0,t_1]}\leqslant 2 M\Delta\leqslant  \beta_2\varrho,
$
with $\beta_2=\frac{2M\Delta}{\bar \varrho}$.

3) $\delta> \Delta$.

\noindent The above proof implies that in Cases 1 and 2,  $\beta_1, \, \beta_2$ can be taken the same if $\delta$ is replaced by any $0< \delta_1 < \delta$.
Define $\tilde \beta=\beta_1\vee \beta_2$ and let $\{[\tau_-^i,\tau_+^i]\}_{i=1}^m$ be a partition of $[t_0,t_1]$ by the intervals with the length at most $\delta/m$.
Put $x_0 (\cdot) :=\hat x (\cdot) $. From Cases 1 and 2,  replacing $[t_0,t_1]$ by $[\tau_-^1,\tau_+^1]$ and setting
\eee{
\varrho_0=\varrho
}
we conclude that there exists an $F$-trajectory $x_1 (\cdot) $ on $[\tau_-^1,\tau_+^1]=[t_0,\tau_+^1]$ such that $x_1(t_0)=\hat x(t_0)$, $x_1\rt\in \ttnn{int } \Omega\rt$ for all $t\in]t_0,\tau_+^1]$, and
\eee{
\|x_1-x_0\|_{\infty,[\tau_-^1,\tau_+^1]}\leqslant  \tilde \beta\varrho_0.
}
Using Filippov's theorem, we can extend the trajectory $x_1 (\cdot) $ on whole interval $[t_0,t_1]$ so that
\eee{
\|x_1-x_0\|_{\infty,[t_0,t_1]}\leqslant  e^{\int_{t_0}^{t_1} \varphi(\tau)\,d\tau} \tilde \beta \varrho_0\leqslant  K \tilde \beta \varrho_0
}
where $K:=e^{\theta_{\varphi}(\delta)}$.
Repeating recursively the above argument on each time interval $[\tau_-^i,\tau_+^i]$, we conclude that there exists a sequence of $F$-trajectories $\{x_i (\cdot) \}_{i=1}^m$ on $[t_0,t_1]$, such that:
\begin{itemize}
\item  $x_i(t_0)=\hat x(t_0)$ for all $i=1,...,m$;
\item $x_i\rt\in \ttnn{int }\Omega\rt$ for all $t\in]t_0,\tau_+^i]$ and  all $i=1,...,m$;
\item $x_j (\cdot) |_{[t_0,\tau_+^{j-1}]}=x_{j-1} (\cdot) $ for all $j=2,...,m$;
\end{itemize}
and
\begin{equation}\label{i-i-1}
\|x_{i}-x_{i-1}\|_{\infty,[t_0,t_1]}\leqslant  K \tilde \beta \varrho_{i-1}\qquad \forall\, i=1,...,m
\end{equation}
where 
\eee{\varrho_{i-1}=\max \{\varrho, \sup_{t\in[t_0,t_1]}d_{\Omega\rt}(x_{i-1} (t) )\}.}
Notice that
\begin{equation}\label{rho_i<}
\varrho_{i}\leqslant  \varrho_{i-1}+\|x_{i}-x_{i-1}\|_{\infty,[t_0,t_1]} \qquad\forall\, i=1,...,m.
\end{equation}
Taking note of (\ref{i-i-1}) and (\ref{rho_i<}) we get for all $i=1,...,m$
\eee{
\|x_{i}-x_{i-1}\|_{\infty,[t_0,t_1]}&\leqslant  K\tilde \beta (\varrho_{i-2}+\|x_{i-1}-x_{i-2}\|_{\infty,[t_0,t_1]})\\
&\leqslant  K\tilde \beta (1+K\tilde \beta)\varrho_{i-2}\\
& \leqslant ...\\
&\leqslant  K\tilde \beta (1+K\tilde \beta)^{i-1}\varrho_0.
}
Then, letting $x (\cdot) :=x_m (\cdot) $ and observing that $\varrho_0\leqslant  \varrho$, we obtain
\eee{
\|x-\hat x\|_{\infty,[t_0,t_1]}&\leqslant \sum_{i=1}^{m}\|x_i-x_{i-1}\|_{\infty,[t_0,t_1]}\\
&\leqslant  K\tilde \beta \varrho_0 \sum_{i=1}^{m}(1+K\tilde \beta)^{i-1}\leqslant  \beta_3\varrho,
}
where  $
\beta_3=(1+K\tilde \beta)^m-1.
$

Then all conclusions of the theorem follow with \(\beta=\tilde \beta \vee\beta_3\). Observe that $\beta$ depends only on $\varepsilon ,\hat \eta $, $M$, $\delta$, and on functions $\gamma (\cdot) $ and $\varphi (\cdot) $.

Now, assume $F\neq \ttnn{co }F$. From the first part of the proof, we have that there exist $\beta>0$ (that does not depend on the reference trajectory $\hat{x}\ccd $ on $[t_0, t_1]$) and a $\ttnn{co }F$ trajectory $ \bar x \ccd :[t_0, t_1] \rightarrow \mathbb{R}^{n}$,  strictly feasible on $ ]t_0, t_1]$, such that
\eee{
\left\| \bar x -\hat{x} \right\|_{\infty, [t_0,t_1]} \leqslant \beta \varrho .
}
Let $\left\{s_{i}\right\}_i\subset  ]t_0, t_1]$ with $s_{1}=t_1$ be a decreasing sequence such that $s_{i} \ra t_0$. Since $\bar x\ccd $ is strictly feasible on $ ]t_0, t_1]$ we can find a sequence of decreasing numbers $\{\eps_{i}\}_i \subset]0, \varrho[$ such that $\eps_{i} \ra 0$ and
\equazioneref{BB1}{
 \bar x (\sigma)+\eps_{i} \mathbb{B} \subset \Omega(\sigma) \quad \forall \sigma \in\left[s_{i}, t_1\right]\; \forall i\mageq 2.
}
Without loss of generality, we can assume that $\eps_i\mineq \frac{1}{4}\wedge \varrho$ for all $i\in \bb N^+$. Put $C:=e^{ \int_{t_0}^{t_1} \varphi(\sigma) d \sigma}$ and define $a_k:=\frac{\eps_k}{C^k}$ for all $k\in \bb N$. Notice that
\eee{
\sum_{k=i}^{\infty} C^{k} a_{k}<\frac{\eps_{i}}{ 2} \quad \forall i \geqslant 2 .
}
We recall the following known relaxation result.
\begin{lemm}[Relaxation, \cite{vinter00}]Consider a measurable set-valued map $F: [S,T]\times \bb R^n \rightsquigarrow \bb R^{n}$ with closed and nonempty values. Assume that there exist $\varphi,\psi \in L^{1}(S,T;\bb R)$  such that
\eee{
F (t,  x) &\subset F (t, y)+\varphi(t) \bb B \quad \forall  t\in [S,T] \; \forall x,y\in \bb R^n\\
F(t, x) &\subset \psi(t)\bb  B \quad \forall(t, x) \in [S,T]\times \bb R^n.
}
Take any feasible $\ttnn{co }F$-trajectory $x(\cdot)$  and any $\eps>0$. Then there exists an $F$-trajectory $y(\cdot)$ that satisfies $y(S)=x(S)$ and
\eee{
\norm{y-x}_{\infty,[S,T]}<\eps .
}
\end{lemm}
\noindent From the above Lemma, there exist a sequence of $F$-trajectories $x_{i}:\left[s_{i}, t_1\right] \rightarrow \mathbb{R}^{n}$ such that, for all   $i \geqslant 2$, we have $x_{i}\left(s_{i}\right)= \bar x \left(s_{i}\right)$ and
\equazioneref{dis_min_a_k}{
\norm{x_{i} - \bar x }_{\infty, [s_i,t_1]} \leqslant a_{i} .
}
For each integer $j \geqslant 2$, we construct an $F$-trajectory $y_{j} :\left[s_{j}, t_1\right] \rightarrow \mathbb{R}^{n}$ as follows: $y_2\ccd:=x_2\ccd$ and for all $j>2$
\begin{itemize}
\item $y_{j}\ccd $ is the restriction of $x_{j}\ccd $ on  $ ]s_{j}, s_{j-1} ]$.
\item  for all $1\mineq k\mineq j-2$, $y_{j}\ccd $ restricted to $ ]s_{j-k}, s_{j-k-1}]$ is an $F$-trajectory with initial state $y_{j}\left(s_{j-k}\right)$, obtained by applying Filippov's theorem  with reference trajectory $y_{j-1}\ccd $.
\end{itemize}
Now fix an integer $j>2$. From Filippov's theorem and since $ x_{i} (s_{i} )= \bar x  (s_{i} )$,  for any $2 \leqslant i<j$ 
\eee{
\norm{y_{j}-x_{i}}_{\infty, [s_i,s_{i-1}] } &\leqslant C\left|y_{j}\left(s_{i}\right)- \bar x \left(s_{i}\right)\right|\\
\norm{y'_{j}- x'_{i}}_{1, [s_i,s_{i-1}]} &\leqslant C\left|y_{j}\left(s_{i}\right)- \bar x \left(s_{i}\right)\right|.
}
From these relations and \rif{dis_min_a_k} it follows that for each $2 \leqslant i<j$ and any  $l\in \bb N^+$ 
\equazioneref{AA1}{
\norm{y_{j}- \bar x}_{\infty, [s_i, s_{i-1}] } &\leqslant \sum_{k=i}^{j} C^{k-i} a_{k}
}
\equazioneref{AA2}{
\norm{y'_{j+l}- y'_{j}}_{1, [s_i,s_{i-1}]} &\leqslant 2 \sum_{k=i+1}^{j+l} C^{k-i} a_{k} .
}
Notice that $y_{j} (s_{j} )= \bar x  (s_{j} )$ for any $j \geqslant 2$. Hence we can extend each $F$-trajectory $y_{j}$ as an co $F$ trajectory to whole interval $[t_0, t_1]$, by setting $y_{j}(\sigma)= \bar x (\sigma)$ for $\sigma \in\left[t_0, s_{j}\right]$. Since the trajectories $\{y_{i}\}_i$ have initial value $\hat{x}(t_0)$ and owing the sub-linear growth of $F$, taking a subsequence and keeping the same notation, we have
\eee{
\exists\; \ttnn{co } F\ttnn{-trajectory }x\ccd : \; y_i\ra x \ttnn{ uniformly on }[t_0, t_1], \ttnn{ with } x(t_0)=\hat{x}(t_0).
}
We conclude to show that $x\ccd$ satisfy all the conclusions with $\beta$ replaced by $\beta+1$. Indeed, due to \rif{AA2}, for each $k \geqslant 2$ the $F$-trajectories trajectories $\{y_{i}\}_i$, restricted to $\left[s_{k}, s_{k-1}\right]$, forms a Cauchy sequence on $W^{1,1}\left(s_{k}, s_{k-1}\right)$\footnote{Here $W^{1,1}(a,b)$ stands for the space of all absolutely continuous functions on $[a,b]$ endowed with the norm $\norm{g}=g(a)+\int_a^b g'(s)ds$.}. So, it follows that the limiting co $F$-trajectory $x\ccd $ is an $F$-trajectory and, since  $\eps_{i} \leqslant \varrho$ for all $i\mageq 2$,
\eee{
\|\hat{x}-x\|_{\infty, [t_0,t_1]} \leqslant\left\|\hat{x}- \bar x \right\|_{\infty, [t_0,t_1]}+\left\|x- \bar x \right\|_{\infty, [t_0,t_1]} \leqslant  ({\beta}+1) \varrho.
}
Moreover, notice that  $x\ccd $ is strictly feasible on $]t_0,t_1]$. Indeed, consider $\sigma \in]t_0, t_1]$. We have  $\sigma \in]s_{i}, s_{i-1}]$ for some $i \geqslant 2$. From \rif{AA1}, \rif{dis_min_a_k}, and \rif{BB1} we get
\eee{
y_{j}(\sigma) \in  \bar x (\sigma)+\frac{\eps_{i}}{2} \mathbb{B} \subset \text { int } \Omega(\sigma) \quad \text { for all } j \geqslant i .
}
Since the $\{y_{j}\}_j$ converge uniformly to $x$,
\eee{
x(\sigma) \in  \bar x (\sigma)+\frac{\eps_{i}}{2} \mathbb{B} \subset \operatorname{int} \Omega(\sigma) .
}
This concludes our proof.
\end{proof}

Now, consider the following state constrained differential inclusion:
\eee{
&x' (t) \in F(t,x (t) )\quad {\ttnn{a.e. } }t\in [t_0,+\infty [\\
&x (t)  \in \Omega\rt \quad \forall\, t\in [t_0,+\infty [,
 }
where $t_0\geqslant 0$. A function $x:[t_0,+\infty [ \to  \mathbb{B}^n$ is said to be:
\begin{itemize}
\item  \textit{$F_{\infty}$-trajectory} if $x|_{[t_0,t_1]} (\cdot) $ is an \textit{F-trajectory}.
\item \textit{feasible $F_{\infty}$-trajectory} if $x|_{[t_0,t_1]} (\cdot) $ is a \textit{feasible F-trajectory} for all $t_1> t_0$.
\end{itemize}

\begin{theo}\label{theo_main_theo}
Consider Assumptions \ref{Ass_F}. Suppose that conditions in \rif{condition_InPoCon} hold true and
\eee{
\limsup_{t \to  \infty} \frac{1}{t}\int_0^t \varphi(\tau)\,d\tau<\infty.
}
Then there exist $C>1$ and $K>0$ such that for any $t_0\geqslant 0$, any $x^0,x^1\in \Omega(t_0)$, and any feasible $F_{\infty}$-trajectory $x:[t_0,+\infty [ \to  \mathbb{R}^n$, with $x(t_0)=x^0$, we can find a feasible $F_{\infty}$-trajectory $\tilde x:[t_0,+\infty [ \to  \mathbb{R}^n$, with $\tilde x(t_0)=x^1$, such that
\eee{
|\tilde x (t) -x (t)|\leqslant  Ce^{{K}t}|x^1-x^0|\qquad \forall\, t\geqslant t_0.
}
\end{theo}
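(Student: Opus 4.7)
My approach is a piecewise construction of $\tilde x$ on a uniform partition $s_k := t_0 + k\delta$ of $[t_0, +\infty)$ with a fixed step $\delta > 0$ chosen once and for all, alternating on each slice between Filippov's theorem (to slide to an $F$-trajectory with the correct initial value) and Theorem \ref{main_theo} (to correct the resulting possibly infeasible trajectory back into $\Omega$). Let $\beta = \beta(\delta) \geq 0$ denote the constant delivered by Theorem \ref{main_theo}, which is uniform across every subinterval of $\bb R^+$ of length $\delta$; let $\Lambda := e^{\theta_\varphi(\delta)}$, finite because $\varphi \in \mathcal L_{\text{loc}}$; and set $A := (1+\beta)\Lambda$ together with $K := \delta^{-1} \log A$.

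\textbf{Inductive construction.} The trajectory $\tilde x$ is built piece by piece, starting from $\tilde x(s_0) := x^1 \in \Omega(s_0)$. Suppose inductively that $\tilde x$ is a feasible $F$-trajectory on $[s_0, s_k]$ with $\rho_k := |\tilde x(s_k) - x(s_k)| \leq A^k |x^1 - x^0|$. On $[s_k, s_{k+1}]$ I apply Filippov's theorem with reference trajectory $x(\cdot)$ to produce an $F$-trajectory $y_k$ with $y_k(s_k) = \tilde x(s_k)$ and
\begin{equation*}
\|y_k - x\|_{\infty, [s_k, s_{k+1}]} \leq e^{\int_{s_k}^{s_{k+1}} \varphi(\tau)\, d\tau}\, \rho_k \leq \Lambda\, \rho_k,
\end{equation*}
using $d_{F(\tau, x(\tau))}(x'(\tau)) = 0$ a.e. Feasibility of $x$ then gives $\sup_{t \in [s_k, s_{k+1}]} d_{\Omega(t)}(y_k(t)) \leq \Lambda\, \rho_k$, so Theorem \ref{main_theo} (applied with reference $y_k$, starting from $\tilde x(s_k) \in \Omega(s_k)$, and $\varrho := \Lambda\, \rho_k$) furnishes a feasible $F$-trajectory on $[s_k, s_{k+1}]$ agreeing with $\tilde x$ at $s_k$ and within sup-distance $\beta \Lambda\, \rho_k$ of $y_k$. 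Gluing this piece to $\tilde x$ and applying the triangle inequality yields $\|\tilde x - x\|_{\infty, [s_k, s_{k+1}]} \leq A\, \rho_k \leq A^{k+1}|x^1 - x^0|$, which closes the induction and in particular controls the matching error at $s_{k+1}$.

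\textbf{Exponential bound and main obstacle.} For any $t \geq t_0$, choosing the $k$ with $t \in [s_k, s_{k+1}]$ gives $k+1 \leq 1 + (t - t_0)/\delta$ and hence
\begin{equation*}
|\tilde x(t) - x(t)| \leq A^{k+1}|x^1 - x^0| \leq A\, e^{K(t - t_0)}|x^1 - x^0| \leq A\, e^{Kt}|x^1 - x^0|,
\end{equation*}
so the conclusion follows with $C := A$ (enlarged slightly if needed so that $C > 1$, $K > 0$). The crux of the argument is the \emph{uniformity} of $\beta$ in Theorem \ref{main_theo}: because $\beta$ depends on the slice only through its length $\delta$ and on the global data $M$, $\varphi$, $\gamma$, not on the position of the slice in $\bb R^+$, the per-step amplification $A$ is the same on every slice and the geometric growth $A^k$ converts cleanly into the required exponential $e^{Kt}$. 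The limsup hypothesis on $\varphi$ is what keeps $\theta_\varphi(\delta)$ and hence $K$ a finite constant uniform in the horizon, while the inward pointing condition \rif{condition_InPoCon} and the left absolute continuity of $F$ enter only through $\beta$.
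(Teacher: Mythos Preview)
Your proof is correct and follows essentially the same scheme as the paper's: a step-by-step construction on a uniform partition, alternating Filippov's estimate with Theorem \ref{main_theo} on each slice, and then converting the geometric growth of the per-step amplification into the exponential bound. One small point: the finiteness of $\theta_\varphi(\delta)$ (and hence of your $\Lambda$ and $K$) already follows from $\varphi\in\mathcal L_{\rm loc}$ in Assumptions \ref{Ass_F}.\ref{lipF}, not from the $\limsup$ hypothesis---so your argument in fact never uses that hypothesis, whereas the paper's bookkeeping tracks the cumulative integral $\int_{t_0}^{t_0+i+1}\varphi$ and invokes the $\limsup$ bound to control it.
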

\begin{proof}
Let $\delta=1$ and $\beta>0$ be as in Theorem \ref{main_theo}.
Consider $K_1>0,K_2>0,$ and $\tilde k>0$ such that
\begin{equation}\label{k1,k2}
2\beta+1 <e^{K_1}\quad{\rm and}\quad\int_{0}^{t+1}\varphi(s)\,ds\leqslant  K_2t+\tilde k \qquad\forall\, t\geqslant 0.
\end{equation}
Fix $t_0\mageq 0$, $x^0,x^1\in \Omega(t_0)$, with $x^1\neq x^0$, and a feasible ${F}_{\infty}$-trajectory $x:[t_0,+\infty [ \to  \mathbb{R}^n$ with $x(t_0)=x_0$. By Filippov's theorem, there exists an $F$-trajectory $y_0:[t_0,t_0+1] \to  \mathbb{R}^n$ such that $y_0(t_0)=x^1$ and
\eee{
\|y_0-x\|_{\infty,[t_0,t_0+1]}\leqslant  e^{\int_{t_0}^{t_0+1}\varphi(s)\,ds} |x^1-x^0|.
}
Denote by $x_0:[t_0,t_0+1] \to  \mathbb{R}^n$ the feasible $F$-trajectory, with $x_0(t_0)=x^1$, satisfying the conclusions of Theorem \ref{main_theo} with $\hat x (\cdot) =y_0 (\cdot) $. Thus
\eee{
\|x_0-y_0\|_{\infty,[t_0,t_0+1]}&\leqslant  \beta (\max_{t\in[t_0,t_0+1]} d_{\Omega\rt}(y_0 (t) )+|x^1-x^0|)\\
&\leqslant  \beta (\|y_0-x\|_{\infty,[t_0,t_0+1]}+|x^1-x^0|)\\
&\leqslant  2\beta e^{\int_{t_0}^{t_0+1}\varphi(s)\,ds}|x^1-x^0|
}
and therefore
\begin{equation}\label{stimastellina}\begin{split} 
\|x_0-x\|_{\infty,[t_0,t_0+1]}&\leqslant  \|x_0-y_0\|_{\infty,[t_0,t_0+1]}+\|y_0-x\|_{\infty,[t_0,t_0+1]}\\
&\leqslant  (2\beta+1)e^{\int_{t_0}^{t_0+1}\varphi(s)\,ds}|x^1-x^0|.
 \end{split}\end{equation}
Now, applying again Filippov's theorem on $[t_0+1,t_0+2]$, there exists an $F$-trajectory $y_1:[t_0+1,t_0+2] \to  \mathbb{R}^n$, with $y_1(t_0+1)=x_0(t_0+1)$, such that, thanks to (\ref{stimastellina}),
\begin{equation}\label{step0}
\|y_1-x\|_{\infty,[t_0+1,t_0+2]}\leqslant  (2\beta+1)e^{\int_{t_0}^{t_0+2}\varphi(s)\,ds}|x^1-x^0|.
\end{equation}
Denoting by $x_1:[t_0+1,t_0+2] \to  \mathbb{R}^n$ the feasible $F$-trajectory, with $x_1(t_0+1)=x_0(t_0+1)$, satisfying the conclusions of Theorem \ref{main_theo}, for $\hat x (\cdot) =y_1 (\cdot) $,  we deduce from (\ref{step0}), that
\begin{equation}\label{step1}
\|x_1-y_1\|_{\infty,[t_0+1,t_0+2]}\leqslant  \beta (2\beta+1)e^{\int_{t_0}^{t_0+2}\varphi(s)\,ds} |x^1-x^0|.
\end{equation}
Hence, taking note of (\ref{step0}) and (\ref{step1}),
\eee{
\|x_1-x\|_{\infty,[t_0+1,t_0+2]}\leqslant  (2\beta+1)^2 e^{\int_{t_0}^{t_0+2}\varphi(s)\,ds}|x^1-x^0|.
}
Continuing this construction, we obtain a sequence of feasible $F$-trajectories $x_i:[t_0+i,t_0+i+1] \to  \mathbb{R}^n$ such that $x_j(t_0+j)=x_{j-1}(t_0+j)$ for all $j\geqslant 1$, and
\begin{equation}\label{passoi}
\|x_i-x\|_{\infty,[t_0+i,t_0+i+1]}\leqslant  (2\beta+1)^{i+1}e^{\int_{t_0}^{t_0+i+1}\varphi(s)\,ds}|x^1-x^0|\;\; \forall\, i\in \mathbb{N}.
\end{equation}
Define the feasible ${F}_{\infty}$-trajectory $\tilde x:[t_0,+\infty [ \to  \mathbb{R}^n$ by
$
\tilde x(t):=x_i(t)$ if $t\in[t_0+i,t_0+i+1]
$
and observe that $\tilde x(t_0)=x^1$.
Let $t\geqslant t_0$. Then there exists $i\in \mathbb{N}$ such that $t\in[t_0+i,t_0+i+1]$. So, from (\ref{passoi}) and (\ref{k1,k2}), it follows that
\eee{
|\tilde x(t)-x(t)|&\leqslant  (2\beta+1)^{i+1}e^{\int_{t_0}^{t_0+i+1}\varphi(s)\,ds}|x^1-x^0|\\
&\leqslant  e^{\tilde k}(2\beta +1) e^{(K_1+K_2)(t_0+i)}|x^1-x^0|\\
& \leqslant  Ce^{K t}|x^1-x^0|,
}
where $K=K_1+K_2$ and $C=e^{\tilde k}(2\beta +1)$.
\\
\end{proof}

\section{Lipschitz Continuity}

Now we give an application of the results of previuous sections to the Lipschitz regularity of the value function for a class of infinite horizon optimal control problems subject to state constraints.

Let us consider\footnote{We recall that for a function $q\in L^1_{{\rm loc}}([t_0,+\infty [;\mathbb{R})$ the integral
\eee{\int_{t_0}^\infty q (t) \,dt:=\lim_{T \to \infty}\int_{t_0}^T q (t) \,dt,}
provided this limit exists.} the problem ($\ccal P$) stated in Section \ref{Sec_2}.

\begin{assumptions}\label{ass_n_2}
 We take the following assumptions on ${\bf f}$ and ${\bf L}$:
\begin{enumerate} 

\item For all $x\in \mathbb{R}^n$ the mappings ${\bf f}(\cdot,x,\cdot),\, {\bf L}(\cdot,x,\cdot)$ are Lebesgue-Borel measurable.
\item There exists $\alpha>0$ such that ${\bf f}$ and ${\bf L}$ are bounded functions on
\eee{\{(t,x,u)\,:\,t\geqslant 0,\, x\in (\partial \Omega\rt +\alpha \mathbb{B}),\, u\in U (t) \}.}

\item For all $(t,x)\in \bb R^+    \times \mathbb{R}^{n}$ the set
\eee{
\{({\bf f}(t,x,u), {\bf L}(t,x,u))\,:\,u\in U (t) \}
}
is closed.

\item There exist $c\in L^1_{{\rm loc}}(\bb R^+   ;\mathbb{R}^+)$ and $k\in \mathcal L_{{\rm loc}}$ such that for any $t\in \mathbb{R}^+,\, x,\,y\in \mathbb{R}^n$, and $u\in U (t) $,
\eee{
|{\bf f}(t,x,u)-{\bf f}(t,y,u)|+	| {\bf L}(t,x,u)- {\bf L}(t,y,u)|\leqslant  k (t)  |x-y|,
}
\eee{
|{\bf f}(t,x,u)|+	| {\bf L}(t,x,u)|\leqslant  c (t) (1+|x|).}
\item\label{ass_2_last} there exist $\tilde{\eta}>0$ and $\gamma \in \mathcal{L}_{\text {loc }}$ such that
\eee{t\rightsquigarrow\{({\bf f}(t,x,u), {\bf L}(t,x,u)): u\in U\rt\}}
 is $\gamma$-left absolutely continuous, uniformly wrt $ \partial \Omega+\tilde{\eta} \mathbb{B}$.

\item $\limsup_{t \to  \infty}\,\frac{1}{t}\int_0^{t} (c(s)+k(s))\,ds <\infty$.

\end{enumerate}
\end{assumptions}

We consider, for any $\lambda>0$, the relaxed infinite horizon state constrained problem
\begin{equation}
\ttnn{minimize } \int_{t}^{\infty}     e^{-\lambda s}{\bf L}^\star(s,x   ,w   )\,ds,\tag{$\ccal P^\star$}
\end{equation}
\eee{
\ttnn{subject to}\quad &x'   =  {\bf f}^\star(s,x ,w ) \quad {\ttnn{ a.e. }} s \\
\quad & x(t)=\bar x\\
\quad & w (s) \in W (s) \quad {\ttnn{ a.e. }} s \\
\quad  & h_1(s, x\rs )\mineq 0 \quad \forall s\mageq t \\
\quad &\vdots\\
\quad  & h_m(s, x\rs )\mineq 0 \quad \forall s\mageq t.\\
}
where
\begin{itemize}
\item $W:\bb R^+    \rightsquigarrow \mathbb{R}^{(n+1)m}\times \mathbb{R}^{n+1}$ is the measurable set-valued map defined by:
\eee{
W (s) :=(\times_{i=0}^n U (s) ) \times \{(\alpha_0,...,\alpha_n)\in \mathbb{R}^{n+1}\,:\, \sum_{i=0}^n \alpha_i=1,\, \alpha_i\geqslant 0\,\, \forall\, i\}
}
for all $s \geqslant 0$.
\item $  {\bf f}^\star: \bb R^+    \times \mathbb{R}^n\times \mathbb{R}^{(n+1)m}\times \mathbb{R}^{n+1} \to  \mathbb{R}^n$ and $   {\bf L}^\star: \bb R^+    \times \mathbb{R}^n\times \mathbb{R}^{(n+1)m}\times \mathbb{R}^{n+1} \to  \mathbb{R}$ are defined by: \eee{
  {\bf f}^\star(s,x,w)&:=\sum_{i=0}^n \alpha_i {\bf f}(s,x,u_i)\\
 {\bf L}^\star(s,x,w)&:=\sum_{i=0}^n\alpha_i  {\bf L}(s,x,u_i)
}
for all $s \geqslant 0$, $x\in \mathbb{R}^n$, and $w=(u_0,...,u_n,\alpha_0,...,\alpha_n)\in\mathbb{R}^{(n+1)m}\times \mathbb{R}^{n+1}$.
\end{itemize}

\begin{rema}
\

\begin{enumerate}
\item For control systems, the condition \rif{condition_InPoCon}   take the following form: for some $\varepsilon >0,\,\eta>0$ and every $t\in \bb R^+,\,x\in (\partial \Omega\rt+\eta \mathbb{B})\cap \Omega\rt$ there exist $\{\alpha_i\}_{i=0}^n\subset [0,1]$, with $\sum_{i=0}^n\alpha_i=1$, and $\{u_i\}_{i=0}^n\subset U(t)$ satisfying
\eee{
	 \{y+[0,\varepsilon ] (\sum_{i=0}^n\alpha_i{\bf f}(t,x,u_i)+\varepsilon  \mathbb{B} )\,:\, y\in (x+\varepsilon  \mathbb{B})\cap \Omega\rt  \}\subset \Omega\rt.
}

\item If there exist $\tilde \eta>0,\,\gamma,\,\tilde \gamma\in \mathcal{L}_{{\rm loc}}$, and $k\geqslant 0$ such that $({\bf f} ,{\bf L} )$ is $\gamma$-left absolutely continuous, uniformly wrt $ (\partial \Omega+\tilde \eta \mathbb{B})\times \mathbb{R}^m$, $U (\cdot) $ is $\tilde \gamma$-left absolutely continuous, and ${\bf f}(t,x,\cdot)$ is $k$-Lipschitz continuous for all $t\in \bb R^+,\,x\in    (\partial \Omega\rt+\tilde \eta \mathbb{B})$, then Assumption \ref{ass_n_2}-\ref{ass_2_last} holds true.
 
\end{enumerate}
\end{rema}

\begin{definition}
We denote by
\eee{
V:\ccal Q_\Omega \rightarrow \bb R\cup\{\pm \infty\}\quad \ttnn{and} \quad V^\star:\ccal Q_\Omega \rightarrow \bb R\cup\{\pm \infty\}
}
the value functions of the infinite horizon control problems ($\ccal P$) and ($\ccal P^\star$), respectively, where
\eee{
\ccal Q_\Omega:=\{(t,x)\in \bb R^+\times \bb R^n: t\in \bb R^+,\, x\in \Omega\rt\}.
}
\end{definition}

Next, we state the main result of this section

\begin{theo}\label{relaxed}Consider Assumptions \ref{ass_n_2}. Suppose that \rif{viability_cond} and \rif{omega_lip_cont} hold true. 
Then there exist $b>1$ and $K>0$ such that for all $\lambda >K$ we have
\begin{enumerate}
\item[(i)]  $ V^\star(t,\cdot)$ is $ b \cdot e^{-(\lambda-K)t}$-Lipschitz continuous on $\Omega\rt$, for any  $t\geqslant 0$.

\item[(ii)]  $\lim_{t \to  \infty} V^\star(t,x (t) )=0$ for any feasible trajectory $x (\cdot) $.

\item[(iii)] $V^\star  =V$ on $\ccal Q_\Omega$.
\end{enumerate}
\end{theo}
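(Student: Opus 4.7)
The plan for (i) is to augment the state by the running cost and reduce to Theorem~\ref{theo_main_theo}. Set $\hat x=(x,z)\in\mathbb R^{n+1}$, $\hat\Omega(t):=\Omega(t)\times\mathbb R$, and
\[
\hat F(t,\hat x):=\{({\bf f}^\star(t,x,w),\,{\bf L}^\star(t,x,w))\,:\,w\in W(t)\};
\]
trajectories of $\hat F$ are exactly pairs $(x(\cdot),z(\cdot))$ with $x'={\bf f}^\star(s,x,w)$ and $z'={\bf L}^\star(s,x,w)$. Because the constraint functions $h_i$ do not depend on $z$, any inward pointing selection $v\in{\rm co}\,F(t,x)$ supplied by \rif{viability_cond} for $F(t,x):=\{{\bf f}^\star(t,x,w):w\in W(t)\}$ lifts to $(v,\,{\bf L}^\star(t,x,w))\in{\rm co}\,\hat F(t,\hat x)$, so the inward pointing condition transfers to $(\hat F,\hat\Omega)$; the remaining items of Assumptions~\ref{Ass_F}, as well as the Lipschitz continuity of $\hat\Omega$ and the Lipschitz constant $k(s)$ of $\hat F(t,\cdot)$, follow from Assumptions~\ref{ass_n_2}. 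Theorem~\ref{theo_main_theo} then yields constants $\hat C>1$ and $\hat K>0$ depending only on the data.

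Fix $t\geqslant 0$, $x^0,x^1\in\Omega(t)$, and an $\varepsilon$-optimal feasible relaxed trajectory $(x,w)$ for $V^\star(t,x^0)$; write $z(s):=\int_t^s{\bf L}^\star(\tau,x,w)\,d\tau$. Theorem~\ref{theo_main_theo} applied to $\hat F$ with initial data $(x^0,0)$ and $(x^1,0)$ produces a feasible $\hat F$-trajectory $(\tilde x,\tilde z)$ with $\tilde x(t)=x^1$ such that
\[
|\tilde z(s)-z(s)|\leqslant \hat C\,e^{\hat K s}\,|x^1-x^0|\qquad\forall\,s\geqslant t.
\]
Set $K:=2\hat K$. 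For $\lambda>K$, sublinear growth of ${\bf L}$ combined with Gronwall and the $\limsup$ hypothesis forces $e^{-\lambda s}z(s),\,e^{-\lambda s}\tilde z(s)\to 0$, so integration by parts gives $J(x,w)=\lambda\int_t^\infty e^{-\lambda s}z(s)\,ds$ and the analogous identity for $J(\tilde x,\tilde w)$. Subtracting and using the pointwise bound,
\[
|J(\tilde x,\tilde w)-J(x,w)|\leqslant \lambda\hat C\,|x^1-x^0|\int_t^\infty e^{-(\lambda-\hat K)s}\,ds=\frac{\lambda\hat C}{\lambda-\hat K}\,e^{-(\lambda-\hat K)t}\,|x^1-x^0|\leqslant 2\hat C\,e^{-(\lambda-K)t}\,|x^1-x^0|,
\]
where the final inequality uses $\lambda/(\lambda-\hat K)\leqslant 2$ and $e^{-(\lambda-\hat K)t}\leqslant e^{-(\lambda-K)t}$ (both valid since $\lambda>K\geqslant \hat K$). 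Letting $\varepsilon\to 0$ and exchanging $x^0\leftrightarrow x^1$ closes (i) with $b:=2\hat C$.

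For (ii), restricting the realizing control of $x(\cdot)$ to $[t,\infty)$ gives the upper bound $V^\star(t,x(t))\leqslant \int_t^\infty e^{-\lambda s}{\bf L}^\star(s,x,w)\,ds$, which vanishes as $t\to\infty$ once sublinear growth and Gronwall are combined with the $\limsup$ hypothesis; the matching lower bound $V^\star(t,x(t))\geqslant -\sup_{\tilde x,\tilde w}\int_t^\infty e^{-\lambda s}|{\bf L}^\star|\,ds$ is obtained from the same estimates applied to every feasible $\tilde x$ starting at $(t,x(t))$, whose norm is controlled in terms of $|x(t)|$. For (iii), $V^\star\leqslant V$ is immediate by viewing each $U$-valued control as the degenerate relaxed control $(u,\ldots,u,1,0,\ldots,0)\in W$; for $V\leqslant V^\star$ I would fix $\varepsilon>0$ and an $\varepsilon$-optimal relaxed feasible $(x^\star,w^\star)$, truncate at a large $T$ so that the cost tail is below $\varepsilon$, apply the relaxation lemma from the proof of Theorem~\ref{main_theo} to the augmented $\hat F$ on $[t,T]$ to produce an $\hat F$-trajectory $(y,z_y)$ uniformly $\varepsilon$-close to $(x^\star,z^\star)$, restore feasibility via Theorem~\ref{main_theo} with an additional $O(\varepsilon)$ perturbation, and extend past $T$ by a viable trajectory supplied by Proposition~\ref{prop_viability}; proximity in the $z$-coordinate passes to proximity of discounted costs exactly as in (i).

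The main difficulty is the tension between the exponential blow-up $e^{\hat K s}$ in the neighbouring-trajectory estimate of Theorem~\ref{theo_main_theo} and the exponential decay $e^{-\lambda s}$ of the discount: integration by parts is precisely what trades one for the other, but only if $\lambda$ is strictly above the growth rate of the data, which is exactly what $\lambda>K$ enforces, and the vanishing $e^{-\lambda s}z(s)\to 0$ must be uniform over the optimizing sequence — hence the careful use of Assumptions~\ref{ass_n_2}\,(6). A secondary obstacle is checking that the augmented $(\hat F,\hat\Omega)$ inherits every hypothesis of Theorem~\ref{theo_main_theo}, in particular \rif{viability_cond}, which is why the $z$-independence of $h_i$ plays a decisive role.
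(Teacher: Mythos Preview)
Your treatment of (i) and (ii) is essentially the paper's: augment the state by the running cost, invoke Theorem~\ref{theo_main_theo} on $\hat\Omega(t)=\Omega(t)\times\mathbb R$, and trade the $e^{\hat K s}$ growth against $e^{-\lambda s}$ by integration by parts. The only notable difference is cosmetic --- you set $K=2\hat K$ to force $\lambda/(\lambda-\hat K)\leqslant 2$ and obtain a $\lambda$-free constant $b$, whereas the paper carries $\tfrac{\lambda C}{\lambda-K}+1$.

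Part (iii) is where your plan diverges from the paper's, and where there is a genuine gap. Proposition~\ref{prop_viability} requires \emph{convex}-valued $\Phi$, so the viable extension you append on $[T,\infty)$ is a ${\rm co}\,F$-trajectory, not an $F$-trajectory; the glued curve is therefore admissible only for $(\mathcal P^\star)$, and you have shown nothing beyond $V^\star\leqslant V^\star$. A second, smaller issue is the phrase ``$O(\varepsilon)$ perturbation'' after Theorem~\ref{main_theo}: the constant $\beta$ there depends on the interval length $T-t$ (in fact exponentially, via $\beta_3=(1+K\tilde\beta)^m-1$ with $m\sim (T-t)/\Delta$), so you must feed the relaxation lemma a tolerance of order $\varepsilon/\beta(T)$, not $\varepsilon$. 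The paper sidesteps both difficulties by never truncating: it works interval by interval on $[t_0+j,t_0+j+1]$, at each step using the dynamic programming principle to produce an $\varepsilon/2^{j+2}$-optimal relaxed arc, de-relaxing it by the relaxation lemma on a unit interval (so $\beta$ is fixed once and for all), restoring feasibility by Theorem~\ref{main_theo} applied to the non-convex $G$, and gluing. The limit is a feasible \emph{non-relaxed} trajectory on the whole half-line with accumulated error $\sum_j\varepsilon/2^{j}$, and part (ii) is then invoked to kill the boundary term $V^\star(t_0+k,x_k(t_0+k))$ as $k\to\infty$. Your truncation idea can be repaired, but only by producing a non-relaxed feasible extension on $[T,\infty)$ --- which, with the tools at hand, again requires an interval-by-interval construction and so collapses to the paper's argument.
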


\begin{proof} 
We notice that, by   Proposition \ref{prop_viability}, the problem ($\ccal P^\star$) admits feasible trajectory-control pairs for any initial condition; using the sub-linear growth of ${\bf f}$ and the Gronwall lemma, we have $1+|x (t) |\leqslant  (1+|x_0|)e^{\int_{t_0}^{t}c(s)\,ds}$ for all $t\geqslant t_0$ and for any trajectory-control pair $(x (\cdot) ,u (\cdot) )$ at $t_0\in \bb R^+,\,x_0\in \Omega(t_0)$.

\noindent In what follows, we define for all $ (t,x,z)\in \bb R^+   \times \mathbb{R}^n\times \mathbb{R} $ the time-measurable set-valued maps
\eee{ 
 G(t,x,z)&=\{({\bf f}(t,x,u),e^{-\lambda t} {\bf L}(t,x,u))\,:\,u\in U (t) \}\\
G^\star(t,x,z)&=\{({\bf f}^\star(t,x,u), {\bf L}^\star (t,x,w))\,:\,w\in W (t) \}.
}
Next, we show $(i)$. Let $a_1>0,\,a_2>0$ be such that
\begin{equation}\label{a_1_a_2}
\int_{0}^{t}c(s)\,ds\leqslant  a_1t+a_2\qquad \forall t\geqslant 0.
\end{equation}
For all $T>t_0$, we have
\begin{equation}\label{prima_stima_Gronwall} \begin{array}{ll}
\int_{t_0}^{T}e^{-\lambda t}| {\bf L}^\star(t,x (t) ,w (t) )|\,dt & \leqslant (n+1) \int_{t_0}^{T}e^{-\lambda t} c (t) (1+|x_0|)e^{\int_{t_0}^{t}c(s)\,ds}\,dt\\
& \leqslant  (n+1)(1+|x_0|)e^{a_2}\int_{t_0}^{T}e^{-(\lambda -a_1)t}c (t) \,dt.
\end{array}\end{equation}
Then, by (\ref{a_1_a_2}) and denoting $\psi (t) =\int_{t_0}^{t}c(s)\,ds$, for any $\lambda >a_1$
\begin{equation}\label{seconda_stima_int_c} \begin{array}{ll}
	&\int_{t_0}^{T}e^{-\lambda t}| {\bf L}^\star(t,x (t) ,w (t) )|\,dt\\
	&\leqslant (n+1)(1+|x_0|)e^{a_2}\left( \left[e^{-(\lambda -a_1)t} \psi (t) \right]_{t_0}^T+(\lambda-a_1)\int_{t_0}^{T}e^{-(\lambda -a_1)t}{\psi (t) }\,dt \right)\\
	&\leqslant (n+1)(1+|x_0|) e^{a_2}\left(e^{-(\lambda -a_1)T}(a_1T+a_2)+\left(a_1t_0+\frac{a_1}{\lambda -a_1}+a_2\right)e^{-(\lambda -a_1)t_0} \right).
	\end{array}  \end{equation}
Passing to the limit when $T \to  \infty$, we deduce that for every feasible trajectory-control pair $(x (\cdot) ,w (\cdot) )$ at $(t_0,x_0)$ 
\eee{
\int_{t_0}^{\infty}e^{-\lambda t}| {\bf L}^\star(t,x (t) ,w (t) )|\,dt<+\infty\qquad \forall \lambda>a_1.
}

\noindent From now on, assume that $\lambda >a_1$. Fix $t\geqslant 0$ and $x^1,x^0\in \Omega\rt$ with $x^1\neq x^0$. Then, for any $\delta>0$ there exists a feasible trajectory-control pair $(x_\delta (\cdot) ,w_\delta (\cdot) )$ at $(t,x^0)$ such that
\eee{
 V^\star(t,x^0)+e^{- \delta t}|x^1-x^0|>\int_{t}^{\infty}e^{-\lambda s} {\bf L}^\star(s,x_\delta(s),w_\delta(s))\,ds.
}
Hence
\begin{equation}\label{prima_stima}\begin{array}{ll}
& V^\star(t,x^1)- V^\star(t,x^0) \leqslant e^{- \delta t}|x^1-x^0| +\\
&  \lim_{\tau \to  \infty}\left|\int_{t}^{\tau}e^{-\lambda s} {\bf L}^\star(s, x(s), w(s))\,ds-\int_{t}^{\tau}e^{-\lambda s} {\bf L}^\star(s,x_\delta(s),w_\delta(s))\,ds\right|
\end{array} \end{equation}
for any feasible trajectory-control pair $( x (\cdot) , w (\cdot) )$ satisfying $x(t)=x^1$. Consider the following state constrained differential inclusion in $\mathbb{R}^{n+1}$
\eee{
&(x,z)' (s) \in  G^\star(s,x (s) ,z (s) ) \quad  {\ttnn{ a.e. }}s\in[t,+\infty [\\
&x (s) \in \Omega\rs \quad \forall \, s\in[t,+\infty [.
}
Putting $z_\delta(s)=\int_{t}^{s} {\bf L}^\star(\xi,x_\delta(\xi),w_\delta(\xi))\,d\xi$, by Theorem \ref{theo_main_theo} applied on $\Omega\rt\times \mathbb{R}$ and the measurable selection theorem, there exist $C>1$ and $K>0$ such that for all $\delta>0$ we can find a $ G^\star_{\infty}$-trajectory $(\tilde x_\delta (\cdot) , \tilde z_\delta (\cdot) )$  on $[t,+\infty [$, and a measurable selection $\tilde w_\delta(s)\in W (s) $ a.e. $s\geqslant t$, satisfying
\eee{
&(\tilde x_\delta,\tilde z_\delta)' (s) =({\bf f}^\star(s,\tilde x_\delta (s) , \tilde w_\delta (s) ), {\bf L}^\star(s,\tilde x_\delta(s),\tilde w_\delta(s)))\qquad {\rm a.e. }\; s\geqslant t,\\
&(\tilde x_\delta(t),\tilde z_\delta(t))=(x^1,0)\\
&\tilde x_\delta(s)\in \Omega\rs \quad \forall s\in [t,+\infty [
}
and for any $s\geqslant t$
\begin{equation}\label{stimachiave}
|\tilde x_\delta (s) -x_\delta (s) |+|\tilde z_\delta (s) -z_\delta (s) |\leqslant  Ce^{K s}|x^1-x^0|.
\end{equation}
Now, relabelling by $K$ the constant $K\vee a_1$, by (\ref{stimachiave}) and integrating by parts, for all $\lambda >K$, all $\tau\geqslant t$, and all $\delta>0$
\begin{equation}\label{stimapreliminare} \begin{split}
&\left|\int_{t}^{\tau}e^{-\lambda s} {\bf L}^\star(s, \tilde x_\delta(s),\tilde w_\delta(s))\,ds-\int_{t}^{\tau}e^{-\lambda s} {\bf L}^\star(s,x_\delta(s),w_\delta(s))\,ds \right|\\
&\leqslant  \left|\left[e^{-\lambda s}\left(\int_t^s  {\bf L}^\star(\xi,\tilde x_\delta(\xi),\tilde w_\delta(\xi))\,d\xi-\int_t^s  {\bf L}^\star(\xi, x_\delta(\xi),w_\delta(\xi))\,d\xi \right)\right]_t^\tau \right|\\[2mm]
&\quad+\lambda \left| \int_t^\tau e^{-\lambda s}\left(\int_t^s  {\bf L}^\star(\xi,\tilde x_\delta(\xi),\tilde w_\delta(\xi))\,d\xi-\int_t^s  {\bf L}^\star(\xi, x_\delta(\xi),w_\delta(\xi))\,d\xi \right)\,ds \right|\\[2mm]
&\leqslant  e^{-\lambda \tau}|\tilde z_\delta(\tau)-z_\delta(\tau)|+\lambda \int_t^\tau e^{-\lambda s}|\tilde z_\delta (s) -z_\delta (s) | ds \\[2mm]
&\leqslant  Ce^{-\lambda \tau}e^{K\tau}|x^1-x^0|+\lambda C \int_{t}^\tau e^{-(\lambda -K)s}|x^1-x^0|\,ds\\[2mm]
&=\left(Ce^{-(\lambda-K)\tau}+\lambda C \left[-\frac{e^{-(\lambda-K)s}}{\lambda -K}\right]_t^\tau \right)|x^1-x^0| \\[2mm]
&=\left(-\frac{CK}{\lambda -K}e^{-(\lambda -K)\tau}+\frac{\lambda C}{\lambda -K}e^{-(\lambda-K)t}\right)|x^1-x^0|\\
&\leqslant {\frac{\lambda C}{\lambda -K}e^{-(\lambda-K)t}}|x^1-x^0|.
\end{split} \end{equation}
Taking note of (\ref{prima_stima}), (\ref{stimapreliminare}), and putting $\delta=\lambda-K$, for all $\lambda >K$ we get
\eee{
 V^\star(t,x^1)- V^\star(t,x^0)
\leqslant  \left(\frac{\lambda C}{\lambda -K}+1\right)e^{-(\lambda-K)t}|x^1-x^0|.
}
By the symmetry of the previous inequality with respect to $x^1$ and $x^0$, and since $\lambda$, $C$, and $K$ do not depend on $t$, $x^1$, and $x^0$, the statement $(i)$ follows.

Now, let $(t_0,x_0)\in \ccal Q_\Omega$ and consider a feasible trajectory $X (\cdot) $ at $(t_0,x_0)$. Let $t> t_0$ and $(x (\cdot) ,w (\cdot) )$ be a feasible trajectory-control pair at $(t,X (t) )$ such that $ V^\star(t,X (t) )>\int_{t}^{\infty}e^{-\lambda s} {\bf L}^\star(s,x (s) ,w (s) )\,ds-\frac{1}{t}$. Then 
\eee{| V^\star(t,X (t) )|\leqslant  \int_{t}^{\infty}e^{-\lambda s}| {\bf L}^\star(s,x (s) ,w (s) )|\,ds+\frac{1}{t}.
}
From (\ref{a_1_a_2}) and (\ref{prima_stima_Gronwall}), we have for all $T>t$
\eee{
&\int_{t}^{T}e^{-\lambda s}| {\bf L}^\star(s,x (s) ,w (s) )|\,ds \leqslant  \int_{t}^{T}e^{-\lambda s} (1+|X(t)|)e^{\int_{t}^{s}c(\omega)\,d\omega}c (s) \,ds.\\
&\leqslant  (1+|x_0|)\int_{t}^{T}e^{-\lambda s} e^{\int_{t_0}^{t}c(\omega)\,d\omega}e^{\int_{t}^{s}c(\omega)\,d\omega}c (s) \,ds\\
&\leqslant (1+|x_0|)\int_{t}^{T}e^{-\lambda s} e^{\int_{0}^{s}c(\omega)\,d\omega}c (s) \,ds\\
& \leqslant  (1+|x_0|)e^{a_2}\int_{t}^{T}e^{-(\lambda-a_1) s} c (s) \,ds.
}
Then, arguing as in (\ref{seconda_stima_int_c}) with $t_0$ replaced by $t$ and taking the limit when $T \to  \infty$, we deduce that
\eee{
	| V^\star(t,X (t) )|\leqslant  (1+|x_0|)e^{a_2} \left(a_1t+\frac{a_1}{\lambda -a_1}+a_2\right)e^{-(\lambda -a_1)t}+\frac{1}{t}.
}
Since $K\geqslant a_1$, $(ii)$  follows passing to the limit when $t \to + \infty$.

Next, we show $(iii)$. Notice that $ V^\star (t,x)\leqslant  V(t,x)$ for any $(t,x)\in \ccal Q_\Omega$, and   $ V^\star (t,\cdot)$   is Lipschitz continuous on $\Omega\rt$ for all $t\geqslant 0$ whenever $\lambda>0$ is sufficiently large. Fix $t_0\in \bb R^+,\,x_0\in  \Omega(t_0)$, and $\eps>0$. We claim that: for all $j\in \mathbb{N}^+$ there exists a finite set of trajectory-control pairs
$\{(x_k (\cdot) ,u_k (\cdot) )\}_{k=1,...,j}$
satisfying the following: $x_k' (s) ={\bf f}(s,x_k' (s) ,u_k' (s) )$ a.e. $s\in [t_0,t_0+k]$ and $x_k(s)\in \Omega\rs$ for all $s\in [t_0,t_0+k]$ and for all $k=1,...,j$; if $j\geqslant 2$, $x_k|_{[t_0,t_0+k-1]} (\cdot) =x_{k-1} (\cdot) $ for all $k=2,...,j$; and for all $k=1,...,j$
\begin{equation}\label{tesi_k}
	 V^\star (t_0,x_0)\geqslant  V^\star (t_0+k,x_k(t_0+k))+\int_{t_0}^{t_0+k} e^{-\lambda t} {\bf L}(t,x_k(t),u_k(t))\,dt-\varepsilon \sum_{i=1}^{k}\frac{1}{2^i}.
\end{equation}
We prove the claim by the induction argument with respect to $j\in \mathbb{N}^+$. By the dynamic programming principle, there exists a trajectory-control pair $(\tilde x (\cdot) ,\tilde w (\cdot) )$ on $[t_0,t_0+1]$, feasible for the problem ($\ccal P^\star$) at $(t_0,x_0)$, such that
\begin{equation}\label{passo0}
	 V^\star (t_0,x_0)+\frac{\varepsilon }{4}> V^\star (t_0+1,\tilde x(t_0+1))+\int_{t_0}^{t_0+1}e^{-\lambda t}   {\bf L}^\star(t,\tilde x (t) ,\tilde w (t) )\,dt.
\end{equation}
By the relaxation theorem for finite horizon problems (cfr. \cite{vinter00}), for any $h>0$ there exists a measurable control $\hat u^h(t)\in U(t)$ a.e. $t\in [t_0,t_0+1]$ such that the solution of the equation $(\hat x^h)' (t) ={\bf f}(t,\hat x^h (t) ,\hat u^h (t) )$ a.e. $t\in [t_0,t_0+1]$, with $\hat x^h(t_0)=x_0$, satisfies
\eee{
	\|\hat x^h-\tilde x\|_{\infty,[t_0,t_0+1]}< h
}
and
\eee{
	\left|\int_{t_0}^{t_0+1}e^{-\lambda t}   {\bf L}^\star(t,\tilde x (t) ,\tilde w (t) )\,dt-\int_{t_0}^{t_0+1}e^{-\lambda t} {\bf L}(t,\hat x^h (t) ,\hat u^h (t) )\,dt\right| < h.
}
Now, consider the following state constrained differential inclusion in $\mathbb{R}^{n+1}$
\eee{
	(x,z)' (s) &\in   G(s,x (s) ,z (s) ) \quad {\rm a.e. }\; s\in[t_0,t_0+1]\\
	x (s) &\in \Omega\rs \quad \forall\, s\in[t_0,t_0+1].
}
Letting $\hat X^h(\cdot)=(\hat x^h(\cdot),\hat z^h(\cdot))$, with $\hat z^h (t) =\int_{t_0}^{t} e^{-\lambda s} {\bf L}(s,\hat x^h (s) ,\hat u^h (s) )\,ds$, by Theorem \ref{main_theo}   and the measurable selection theorem, there exist $\beta >0$ (not depending on $(t_0,x_0)$) such that for any $h>0$ we can find a feasible $  G$-trajectory $X^h (\cdot) =(x^h (\cdot) ,z^h (\cdot) )$ on $[t_0,t_0+1]$, with $X^h(t_0)=(x_0,0)$, and a measurable control $u^h (s) \in U (s) $ a.e. $s\in [t_0,t_0+1]$, such that
\eee{
	&( x^h, z^h)'(s)=({\bf f}(s, x^h (s) ,  u^h (s) ),e^{-\lambda s} {\bf L}(s, x^h(s), u^h(s)))\\
	&\qquad {\rm a.e. }\; s\in[t_0,t_0+1]
	}
and
\eee{
	\|X^h-\hat X^h\|_{\infty,[t_0,t_0+1]} \leqslant  \beta ( \sup_{s\in [t_0,t_0+1]}d_{\Omega\rs\times \mathbb{R}}(\hat X^h(s))+h).
}
Since $\sup_{s\in [t_0,t_0+1]}d_{\Omega\rs\times \mathbb{R}}(\hat X^h(s))\leqslant  \|\tilde x-\hat x^h\|_{\infty,[t_0,t_0+1]}$, we have
\eee{
&\left|\int_{t_0}^{t_0+1} e^{-\lambda t} {\bf L}(t, x^h (t) , u^h (t) )\,dt-\int_{t_0}^{t_0+1}e^{-\lambda t}   {\bf L}^\star(t,\tilde x (t) ,\tilde w (t) )\,dt\right|\\
&\leqslant  \left|\int_{t_0}^{t_0+1}e^{-\lambda t}   {\bf L}^\star(t,\tilde x (t) ,\tilde w (t) )\,dt-\int_{t_0}^{t_0+1}e^{-\lambda t} {\bf L}(t,\hat x^h (t) ,\hat u^h (t) )\,dt\right|\\
&\qquad+\left|\int_{t_0}^{t_0+1} e^{-\lambda t} {\bf L}(t, x^h (t) , u^h (t) )\,dt-\int_{t_0}^{t_0+1}e^{-\lambda t} {\bf L}(t,\hat x^h (t) ,\hat u^h (t) )\,dt \right|\\
&< h(2\beta+1)
}
and
\eee{
	\| x^h-\tilde x\|_{\infty,[t_0,t_0+1]}&\leqslant  \|\tilde x-\hat x^h\|_{\infty,[t_0,t_0+1]}+\| x^h-\hat x^h\|_{\infty,[t_0,t_0+1]}\\
	&< h(2\beta+1).
}
Hence, choosing $0<h<\varepsilon /4(2\beta+1)$ sufficiently small, we can find a trajectory-control pair $(x^h (\cdot) ,u^h (\cdot) )$ on $[t_0,t_0+1]$, with $u^h (s) \in U (s) $ and $(x^h)' (s) ={\bf f}(s,x^h (s) ,u^h (s) )$ a.e. $s\in [t_0,t_0+1]$, $x^h(t_0)=x_0$, and $x^h(s)\in \Omega\rs$ for $s\in [t_0,t_0+1]$, such that, by (\ref{passo0}) and continuity of $ V^\star (t_0+1,\cdot)$
\eee{
 V^\star (t_0,x_0)> V^\star (t_0+1, x^h(t_0+1))+\int_{t_0}^{t_0+1} e^{-\lambda t} {\bf L}(t, x^h (t) , u^h (t) )\,dt-\frac{\varepsilon }{2}.
}
Letting $(x_1 (\cdot) ,u_1 (\cdot) ):=(x^h (\cdot) ,u^h (\cdot) )$, the conclusion follows for $j=1$. Now, suppose we have shown that there exist $\{(x_k (\cdot) ,u_k (\cdot) )\}_{k=1,...,j}$ satisfying the claim. Let us to prove it for $j+1$. By the dynamic programming principle there exists a trajectory-control pair $(\tilde x (\cdot) ,\tilde w (\cdot) )$ on $[t_0+j,t_0+j+1]$, feasible for the problem ($\ccal P^\star$) at $(t_0+j,x_j(t_0+j))$, such that
\begin{equation}\label{passo_j+1}  \begin{array}{ll}
	 V^\star (t_0+j,x_j(t_0+j))+\frac{\varepsilon }{2^{j+2}}&> V^\star (t_0+j+1,\tilde x(t_0+j+1))\\
	&\qquad+\int_{t_0+j}^{t_0+j+1}e^{-\lambda t}   {\bf L}^\star(t,\tilde x (t) ,\tilde w (t) )\,dt.
\end{array} \end{equation}
As before, for every $h>0$ there exist a feasible $  G$-trajectory $X^h (\cdot) =(x^h (\cdot) ,z^h (\cdot) )$ on $[t_0+j,t_0+j+1]$, with $X^h(t_0)=(x_j(t_0+j),0)$, and a measurable control $u^h (s) \in U (s) $ a.e. $s\in [t_0+j,t_0+j+1]$, such that 
\eee{
&( x^h, z^h)'(s)=({\bf f}(s, x^h (s) ,  u^h (s) ),e^{-\lambda s} {\bf L}(s, x^h(s), u^h(s))) \\
&\qquad {\rm a.e. }\;s\in[t_0+j,t_0+j+1]
} satisfying
\eee{
	&\left|\int_{t_0+j}^{t_0+j+1} e^{-\lambda t} {\bf L}(t, x^h (t) , u^h (t) )\,dt-\int_{t_0+j}^{t_0+j+1}e^{-\lambda t}   {\bf L}^\star(t,\tilde x (t) ,\tilde w (t) )\,dt \right|\\
	& <h(2\beta+1)
}
and
\eee{
	\| x^h-\tilde x\|_{\infty,[t_0+j,t_0+j+1]}<h(2\beta+1).
}
Putting
\begin{equation}\label{x_j+1}
(x_{j+1} (\cdot) ,u_{j+1} (\cdot) ):= \left\{ \begin{array}{ll}
	(x_{j} (\cdot) ,u_j (\cdot) ) &{\rm on }\;\;[t_0,t_0+j]\\
	(x^h (\cdot) ,u^h (\cdot) )&{\rm on }\;\;[t_0+j,t_0+j+1]
\end{array} \right.
\end{equation}
and choosing $0<h<\varepsilon /2^{j+2}(2\beta+1)$ sufficiently small, it follows from (\ref{passo_j+1}) that
\begin{equation}\label{Vpassoj+1} \begin{array}{ll}
	 V^\star (t_0+j,x_j(t_0+j))&\geqslant  V^\star (t_0+j+1, x_{j+1}(t_0+j+1))\\
	&\qquad+\int_{t_0+j}^{t_0+j+1}e^{-\lambda t}  {\bf L}(t, x_{j+1} (t) , u_{j+1} (t) )\,dt-\frac{2\varepsilon }{2^{j+2}}.
\end{array}  \end{equation}
So, taking note of (\ref{x_j+1}) and (\ref{Vpassoj+1}), we obtain
\eee{
	& V^\star (t_0,x_0)\\
	&\geqslant  V^\star (t_0+j,x_j(t_0+j))+\int_{t_0}^{t_0+j}e^{-\lambda t}  {\bf L}(t,x_j(t),u_j(t))\,dt-\varepsilon \sum_{i=1}^{j}\frac{1}{2^i}\\
	&\geqslant  V^\star (t_0+j+1, x_{j+1}(t_0+j+1)) -\varepsilon \sum_{i=1}^{j}\frac{1}{2^i}-\frac{\varepsilon }{2^{j+1}}\\
	&\qquad+\int_{t_0+j}^{t_0+j+1}e^{-\lambda t} {\bf L}(t, x_{j+1} (t) , u_{j+1} (t) )\,dt\\
	&\qquad+\int_{t_0}^{t_0+j}e^{-\lambda t}  {\bf L}(t,x_j(t),u_j(t))\,dt\\[2mm]
	&= V^\star (t_0+j+1, x_{j+1}(t_0+j+1))\\
	&\qquad+\int_{t_0}^{t_0+j+1}e^{-\lambda t}  {\bf L}(t,x_{j+1}(t),u_{j+1}(t))\,dt-\varepsilon \sum_{i=1}^{j+1}\frac{1}{2^i}.
}
Hence $\{(x_k (\cdot) ,u_k (\cdot) )\}_{k=1,...,j+1}$ also satisfy our claim.
Now, let us define the trajectory-control pair $(x (\cdot) ,u (\cdot) )$ by $(x (t) ,u (t) ):=(x_k (t) ,u_k (t) )$ if $t\in [t_0+k-1,t_0+k]$. Then $(x (\cdot) ,u (\cdot) )$ is a feasible trajectory-control pair for the problem ($\ccal P$) at $(t_0,x_0)$. Since, by $(ii)$,   $ V^\star (t,x (t) ) \to  0$ when $t \to  +\infty$, from (\ref{tesi_k}) we have
\eee{
 V^\star (t_0,x_0)
&\geqslant \int_{t_0}^{\infty}e^{-\lambda t}  {\bf L}(t,x(t),u(t))\,dt-\varepsilon .
}
Hence, we deduce that $(t_0,x_0)$ lays in the domain of the value function $V$ and so 
$ V^\star (t_0,x_0)\geqslant V(t_0,x_0)-\varepsilon $. From the arbitrariness of $\varepsilon $, the conclusion follows.
\end{proof}

\begin{coro}
Consider any $N>0$  with
\eee{
	N \geq \sup\{|{\bf f}(t,x,u)|+| {\bf L}(t,x,u)|\,:\, t\geqslant 0,\,x\in \mathbb{R}^n,\, u\in U (t) \}<\infty.
}
Then, for any $\lambda>0$ sufficiently large, for any $t\geqslant 0$ and any $x\in \Omega\rt$, the function $V(\cdot,x)$ is Lipschitz continuous on $[t,+\infty [$ with constant $\left(L(t)+2e^{-\lambda t}\right)N$ and $L(t):=b e^{-(\lambda-K)t}$.
\end{coro}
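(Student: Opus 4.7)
The plan is a standard dynamic programming argument that combines the spatial Lipschitz estimate on $V$ from Theorem \ref{relaxed} with the uniform bound $N$ on the data. Fix $t\geq 0$, $x$ such that $V(s,x)$ is defined on the relevant interval, and $s_1, s_2\in [t,+\infty)$ with $s_1<s_2$. The argument estimates $V(s_2, x) - V(s_1, x)$ in both directions. Note first that Theorem \ref{relaxed}(iii) gives $V = V^\star$, and Theorem \ref{relaxed}(i) then says $V(s, \cdot)$ is $L(s)$-Lipschitz on $\Omega(s)$ with $L(s)=be^{-(\lambda-K)s}$, which is non-increasing in $s$ since $\lambda>K$.

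For the upper bound $V(s_2, x) - V(s_1, x)$, given $\varepsilon>0$, I pick a feasible trajectory-control pair $(y, u)$ at $(s_1, x)$ with $V(s_1, x) + \varepsilon \geq \int_{s_1}^{\infty} e^{-\lambda \tau} {\bf L}(\tau, y, u)\,d\tau$. Splitting the integral at $s_2$ and applying the dynamic programming inequality $\int_{s_2}^{\infty} e^{-\lambda \tau} {\bf L}(\tau, y, u)\,d\tau \geq V(s_2, y(s_2))$, the bounds $|{\bf L}|\leq N$ and $e^{-\lambda \tau}\leq e^{-\lambda s_1}$ give $\bigl|\int_{s_1}^{s_2} e^{-\lambda \tau} {\bf L}\,d\tau\bigr| \leq N e^{-\lambda s_1}(s_2-s_1)$. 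Coupling the Lipschitz regularity of $V(s_2,\cdot)$ with $|y(s_2)-x|\leq N(s_2-s_1)$ (from $|{\bf f}|\leq N$) yields $V(s_2, y(s_2)) \geq V(s_2, x) - L(s_2) N (s_2-s_1)$. Letting $\varepsilon\to 0$ gives the bound in this direction.

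For the reverse direction, the existence of a feasible $F$-trajectory $\tilde y$ on $[s_1, s_2]$ starting at $x$ follows from Proposition \ref{prop_viability} (whose applicability is ensured by the inward pointing hypothesis of Theorem \ref{relaxed}, as highlighted in the remark after that proposition). I concatenate $\tilde y$ on $[s_1, s_2]$ with a near-optimal trajectory-control pair for $V(s_2, \tilde y(s_2))$; then sub-optimality for $V(s_1, x)$, together with the same bound on the short integral and the spatial Lipschitz continuity of $V(s_2, \cdot)$ applied to $|\tilde y(s_2) - x| \leq N(s_2-s_1)$, produces the symmetric inequality.

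Combining both directions and using $L(s)\leq L(t)$ and $e^{-\lambda s}\leq e^{-\lambda t}$ on $[t,\infty)$ yields $|V(s_2,x)-V(s_1,x)| \leq (L(t)+e^{-\lambda t})N|s_2-s_1|$, which implies the stated constant $(L(t)+2e^{-\lambda t})N$. The only non-trivial point in the argument is invoking viability for the second direction to construct a genuinely feasible trajectory on $[s_1,s_2]$ from $x$; everything else is bookkeeping with the $L^\infty$ bound $N$ and the Lipschitz estimate already established in Theorem \ref{relaxed}.
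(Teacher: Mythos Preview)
Your argument is correct and follows essentially the same dynamic-programming route as the paper: combine the spatial Lipschitz bound from Theorem~\ref{relaxed} with the uniform bound $N$ on $({\bf f},{\bf L})$ and the DPP on a short interval. The only cosmetic difference is that the paper takes a single near-optimal feasible pair at the earlier time (with approximation error chosen as $N|s-\tilde s|e^{-\lambda t}$, which accounts for the extra $e^{-\lambda t}$ in the stated constant) and then argues by symmetry, whereas you treat the two inequalities separately and explicitly invoke Proposition~\ref{prop_viability} for the sub-optimality direction; this is why you obtain the slightly sharper constant $(L(t)+e^{-\lambda t})N$.
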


\begin{proof}
From Theorem \ref{relaxed}, when $\lambda>0$ is large enough, $V(t,\cdot)$ is $L (t) $-Lipschitz continuous on $\Omega\rt$. Fix $t\geqslant 0$ and  $x\in \Omega\rt$. Let $s,\tilde s\in [t,+\infty [$.

Suppose that $s\geqslant \tilde s$. Then, by the dynamic programming principle, there exists a feasible trajectory-control pair $(\bar x (\cdot) ,\bar u (\cdot) )$ at $(\tilde s,x)$ such that
\begin{equation}\label{s_mag_tilde_s}\begin{split} 
	V(s,x)-V(\tilde s,x)&\leqslant  |V(s,x)-V(s,\bar x(s))|+\int_{\tilde s}^{s}e^{-\lambda \xi}| {\bf L}(\xi,\bar x(\xi),\bar u(\xi))|\,d\xi\\
	&\qquad+N|s-\tilde s|e^{-\lambda t}\\
	&\leqslant  L(s)N|s-\tilde s|+N|s-\tilde s|e^{-\lambda \tilde s}+N|s-\tilde s|e^{-\lambda t}\\
	&\leqslant  {\left(L(t)+2e^{-\lambda t}\right)N}|s-\tilde s|.
 \end{split} \end{equation}
Arguing in a similar way, we get (\ref{s_mag_tilde_s}) when $s<\tilde s$. Hence, by the symmetry with respect to $s$ and $\tilde s$ in (\ref{s_mag_tilde_s}), the conclusion follows.
\\
\end{proof}

\begin{rema}
%The relaxation result Theorem \ref{relaxed} became a key point whnever convex data assumption are missing. Indeed, switching to the relaxed problem, convergence and Lipschitz regularity are still guaranted. This is also a natural setting for many machine algorithm, when Lisphcitz regiularity of the value function is a preferible property for speed rate convergence. Moreover, the presence of state constratints could make the function approximation in learning process unstable or error divergent. We refer to the works \cite{munos1998general,munos2000study,weston2022mixed} for further discussions on convergence guarantee process in reinforcement learning and path planning algorithms for unmanned veichles, under which stronger controllability than \rif{viability_cond} are satisfied.
%
The relaxation result in Theorem \ref{relaxed} assumes crucial significance when convex data assumptions are absent. By transitioning to the relaxed problem, we ensure that both convergence and Lipschitz regularity remain guaranteed. This approach naturally aligns with the need of machine learning algorithms, in which the desiderable property of Lipschitz regularity of the value function improve significantly convergence rates (cfr. \cite{bertsekas2012dynamic,bertsekas2019reinforcement}). As mentioned earlier in the Introduction, it is well known that incorporating state constraints in the learning process can introduce instability or lead to error divergence in function approximation techniques.
%To delve deeper into discussions on convergence guarantees in reinforcement learning and path planning algorithms for unmanned vehicles, we refer to the works \cite{munos1998general, munos2000study, weston2022mixed}. These studies go beyond the conditions stated in \rif{viability_cond}, encompassing stronger controllability requirements that enhance the overall robustness and reliability of the methods.
Hence, to bolster the overall robustness and reliability of the methods, inward point conditions such as those in \rif{viability_cond} play a critical role. We refere the reader to \cite{de2016optimal,munos1998general, munos2000study, weston2022mixed} for a more comprehensive understanding of the roles of inward pointing conditions and Lispschitz continuity of value functions in convergence guarantees for reinforcement learning with uncertainties and path planning algorithms for autonomous vehicles.
\end{rema}

\section*{Conclusions}
This paper presents a method for recovering the feasibility and Lipschitz regularity of the value function for control problems with time-dependent state constraints and infinite horizon discount factor. These results are essential for addressing optimal synthesis and weak solutions to the Hamilton-Jacobi-Bellman equation. We establish sufficient conditions on the constraint set to ensure feasibility and obtain estimates on the neighboring set of feasible trajectories, based on recent viability results. An important contribution of this paper is the demonstration of the equivalence between the master and relaxed infinite horizon problems. Additionally, we prove that the value function approaches zero at infinity for all feasible sets and large discount factors.

\section*{Conflict of Interest}
The author declare no conflicts of interest in this paper.

\bibliographystyle{plain}
\bibliography{BIBLIO_UPMC_IMJ_nov_2017_new}

\begin{thebibliography}{10}

\bibitem{aubin2009set}
J.-P. Aubin and H.~Frankowska.
\newblock {\em Set-valued analysis}.
\newblock Modern Birkh\"auser Classics. Birkh\"auser Boston, Inc., Boston, MA,
  2009.

\bibitem{baird1995residual}
L.~Baird.
\newblock Residual algorithms: Reinforcement learning with function
  approximation.
\newblock In {\em Machine Learning Proceedings 1995}, pages 30--37. Elsevier,
  1995.

\bibitem{BASCO2022126452}
V.~Basco.
\newblock Weak epigraphical solutions to {H}amilton-{J}acobi-{B}ellman
  equations on infinite horizon.
\newblock {\em Journal of Mathematical Analysis and Applications},
  515(2):126452, 2022.

\bibitem{bascofrankcann2017necessary}
V.~Basco, P.~Cannarsa, and H.~Frankowska.
\newblock Necessary conditions for infinite horizon optimal control problems
  with state constraints.
\newblock {\em Mathematical Control and Related Fields}, 8(3-4):535--555, 2018.

\bibitem{basco2019hamilton}
V.~Basco and H.~Frankowska.
\newblock Hamilton--{J}acobi--{B}ellman equations with time-measurable data and
  infinite horizon.
\newblock {\em Nonlinear Differential Equations and Applications NoDEA},
  26(1):7, 2019.

\bibitem{bascofrankowska2018lipschitz}
V.~Basco and H.~Frankowska.
\newblock Lipschitz continuity of the value function for the infinite horizon
  optimal control problem under state constraints.
\newblock In F.~Alabau-Boussouira et~al., editor, {\em Trends in Control Theory
  and Partial Differential Equations}, volume~32 of {\em Springer INdAM
  Series}, pages 15 -- 52. Springer International Publishing, 2019.

\bibitem{bertsekas2012dynamic}
D.~Bertsekas.
\newblock {\em Dynamic programming and optimal control: Volume I}, volume~1.
\newblock Athena scientific, 2012.

\bibitem{bertsekas2019reinforcement}
D.~Bertsekas.
\newblock {\em Reinforcement learning and optimal control}.
\newblock Athena Scientific, 2019.

\bibitem{blackwell1965discounted}
D.~Blackwell.
\newblock Discounted dynamic programming.
\newblock {\em The Annals of Mathematical Statistics}, 36(1):226--235, 1965.

\bibitem{boyan1994generalization}
J.~Boyan and A.~Moore.
\newblock Generalization in reinforcement learning: Safely approximating the
  value function.
\newblock {\em Advances in neural information processing systems}, 7, 1994.

\bibitem{calafiore2012robust}
G.C. Calafiore and L.~Fagiano.
\newblock Robust model predictive control via scenario optimization.
\newblock {\em IEEE Transactions on Automatic Control}, 58(1):219--224, 2012.

\bibitem{cannon2010stochastic}
M.~Cannon, B.~Kouvaritakis, S.V. Rakovi{\'c}, and Q.~Cheng.
\newblock Stochastic tubes in model predictive control with probabilistic
  constraints.
\newblock {\em IEEE Transactions on Automatic Control}, 56(1):194--200, 2010.

\bibitem{crandalllionsevans1984someproperties}
M.G. Crandall, L.C. Evans, and P.-L. Lions.
\newblock Some properties of viscosity solutions of {H}amilton-{J}acobi
  equations.
\newblock {\em Trans. Amer. Math. Soc.}, 282(2):487--502, 1984.

\bibitem{crandalllions1983viscosity}
M.G. Crandall and P.-L. Lions.
\newblock Viscosity solutions of {H}amilton-{J}acobi equations.
\newblock {\em Trans. Amer. Math. Soc.}, 277(1):1--42, 1983.

\bibitem{de2013optimal}
B.~De~Jager, T.~Van~Keulen, and J.~Kessels.
\newblock {\em Optimal control of hybrid vehicles}.
\newblock Springer, 2013.

\bibitem{de2016optimal}
M.R. De~Pinho, Z.~Foroozandeh, and A.~Matos.
\newblock Optimal control problems for path planing of {AUV} using simplified
  models.
\newblock In {\em 2016 ieee 55th conference on decision and control (CDC)},
  pages 210--215. IEEE, 2016.

\bibitem{farina2013probabilistic}
M.~Farina, L.~Giulioni, L.~Magni, and R.~Scattolini.
\newblock A probabilistic approach to model predictive control.
\newblock In {\em 52nd IEEE Conference on Decision and Control}, pages
  7734--7739. IEEE, 2013.

\bibitem{feichtinger2018control}
G.~Feichtinger, R.M. Kovacevic, and G.~Tragler.
\newblock {\em Control Systems and Mathematical Methods in Economics}, volume
  687.
\newblock Lecture Notes in Economics and Mathematical Systems. Springer, 2018.

\bibitem{FRANKEL2016396}
A.~Frankel.
\newblock Discounted quotas.
\newblock {\em Journal of Economic Theory}, 166:396--444, 2016.

\bibitem{gordon1995stable}
G.J. Gordon.
\newblock Stable function approximation in dynamic programming.
\newblock In {\em Machine learning proceedings 1995}, pages 261--268. Elsevier,
  1995.

\bibitem{hashimoto2013probabilistic}
T.~Hashimoto.
\newblock Probabilistic constrained model predictive control for linear
  discrete-time systems with additive stochastic disturbances.
\newblock In {\em 52nd IEEE Conference on Decision and Control}, pages
  6434--6439. IEEE, 2013.

\bibitem{hewing2018stochastic}
L.~Hewing and M.N. Zeilinger.
\newblock Stochastic model predictive control for linear systems using
  probabilistic reachable sets.
\newblock In {\em 2018 IEEE Conference on Decision and Control (CDC)}, pages
  5182--5188. IEEE, 2018.

\bibitem{kamgarpour2017infinite}
M.~Kamgarpour and T.~Summers.
\newblock On infinite dimensional linear programming approach to stochastic
  control.
\newblock {\em IFAC-PapersOnLine}, 50(1):6148--6153, 2017.

\bibitem{kouvaritakis2006mpc}
B.~Kouvaritakis, M.~Cannon, and P.~Couchman.
\newblock Mpc as a tool for sustainable development integrated policy
  assessment.
\newblock {\em IEEE Transactions on Automatic Control}, 51(1):145--149, 2006.

\bibitem{kouvaritakis2010explicit}
B.~Kouvaritakis, M.~Cannon, S.V. Rakovi{\'c}, and Q.~Cheng.
\newblock Explicit use of probabilistic distributions in linear predictive
  control.
\newblock {\em Automatica}, 46(10):1719--1724, 2010.

\bibitem{margellos2014road}
K.~Margellos, P.~Goulart, and J.~Lygeros.
\newblock On the road between robust optimization and the scenario approach for
  chance constrained optimization problems.
\newblock {\em IEEE Transactions on Automatic Control}, 59(8):2258--2263, 2014.

\bibitem{menon1990near}
P.K.A. Menon and M.M. Briggs.
\newblock Near-optimal midcourse guidance for air-to-air missiles.
\newblock {\em Journal of Guidance, Control, and Dynamics}, 13(4):596--602,
  1990.

\bibitem{munos1998general}
R.~Munos.
\newblock A general convergence method for reinforcement learning in the
  continuous case.
\newblock In {\em European Conference on Machine Learning}, pages 394--405.
  Springer, 1998.

\bibitem{munos2000study}
R.~Munos.
\newblock A study of reinforcement learning in the continuous case by the means
  of viscosity solutions.
\newblock {\em Machine Learning}, 40:265--299, 2000.

\bibitem{nystrup2019multi}
P.~Nystrup, S.~Boyd, E.~Lindstr{\"o}m, and H.~Madsen.
\newblock Multi-period portfolio selection with drawdown control.
\newblock {\em Annals of Operations Research}, 282(1):245--271, 2019.

\bibitem{postoyan2016stability}
R.~Postoyan, L.~Bu{\c{s}}oniu, D.~Ne{\v{s}}i{\'c}, and J.~Daafouz.
\newblock Stability analysis of discrete-time infinite-horizon optimal control
  with discounted cost.
\newblock {\em IEEE Transactions on Automatic Control}, 62(6):2736--2749, 2016.

\bibitem{schildbach2015linear}
G.~Schildbach, P.~Goulart, and M.~Morari.
\newblock Linear controller design for chance constrained systems.
\newblock {\em Automatica}, 51:278--284, 2015.

\bibitem{van2013infinite}
B.P.G. Van~Parys, P.~J. Goulart, and M.~Morari.
\newblock Infinite horizon performance bounds for uncertain constrained
  systems.
\newblock {\em IEEE Transactions on Automatic Control}, 58(11):2803--2817,
  2013.

\bibitem{vinter00}
R.B. Vinter.
\newblock {\em Optimal Control}.
\newblock Birkh\"auser, Boston, MA, 2000.

\bibitem{weston2022mixed}
J.~Weston, D.~Toli{\'c}, and I.~Palunko.
\newblock Mixed use of pontryagin’s principle and the hamilton-jacobi-bellman
  equation in infinite-and finite-horizon constrained optimal control.
\newblock In {\em International Conference on Intelligent Autonomous Systems},
  pages 167--185. Springer, 2022.

\end{thebibliography}

\end{document}